\documentclass[reqno,english]{amsart}
\usepackage{etex}
\usepackage{amsmath,amssymb,amsthm,bbm,mathtools,comment}
\usepackage[shortlabels]{enumitem}
\usepackage[pdftex,colorlinks,backref=page,citecolor=blue]{hyperref}
\usepackage[mathscr]{euscript}
\usepackage{tikz,babel,adjustbox}
\usepackage{cmtiup}
\usepackage{amsfonts}
\usepackage{graphicx}
\usepackage{caption}
\usepackage{verbatim}
\usepackage{array}
\usepackage[frame,cmtip,arrow,matrix,line,graph,curve]{xy}
\usepackage{graphpap, color, pstricks}
\usepackage{pifont}
\usepackage{cmtiup}
\usepackage{amssymb}
\usepackage{amsthm}
\allowdisplaybreaks

\theoremstyle{plain}
\newtheorem{theorem}{Theorem}[section]

\newtheorem{lem}[theorem]{Lemma}

\newtheorem{proposition}[theorem]{Proposition}
\newtheorem{prop}[theorem]{Proposition}

\theoremstyle{remark}

\newcommand{\beq}[1]{\begin{equation}\label{#1}}
\newcommand{\enq}[0]{\end{equation}}

\usepackage{etex}
\usepackage{amsmath,amssymb,amsthm,bbm,mathtools,comment}
\usepackage[shortlabels]{enumitem}
\usepackage[pdftex,colorlinks,backref=page,citecolor=blue]{hyperref}
\usepackage[mathscr]{euscript}
\usepackage{tikz,babel,adjustbox}
\usepackage{cmtiup}
\usepackage{amsfonts}
\usepackage{graphicx}
\usepackage{caption}
\usepackage{verbatim}
\usepackage{array}
\usepackage[frame,cmtip,arrow,matrix,line,graph,curve]{xy}
\usepackage{graphpap, color, pstricks}
\usepackage{pifont}
\usepackage{cmtiup}
\usepackage{amssymb}

\allowdisplaybreaks

\theoremstyle{plain}

\title{\textbf{Characterization of Maximal Left-Compressed Intersecting Families Generated by a Single Generator}}

\author{\raggedright \textnormal {Nguyen Trong Tuan }}
\email{23n21103@student.hcmus.edu.vn}
\address{Faculty of Mathematics and Computer Science, University of Science, Vietnam National University - Ho Chi Minh City, Vietnam}

\begin{document}
	
	\begin{abstract} 
		In this paper, we focus on left-compressed families \(\mathcal{F}(\mathcal{G})\) generated by a family of sets \(\mathcal{G}\). We establish a necessary and sufficient condition on \(\mathcal{G}\) under which \(\mathcal{F}(\mathcal{G})\) is intersecting. We then introduce a construction of maximal left-compressed intersecting families \(\mathcal{F}_G\) from a single generator by adding suitable sets not containing \(1\) to \(F(G)\). From this, we derive several estimates for the size of \(\mathcal{F}_G\).
	\end{abstract}
	\maketitle
	\section{Introduction}
	\subsection{Basic Terminology and Notation}
	Throughout this paper, we assume that \(n\) and \(k\) are positive integers satisfying \(4 \le 2k \le n\).
	For each positive integer $n$, we denote by $[n]$ the set $\{1,2,\ldots,n\}$, and for positive integers \(m<n\), we write $[m,n]=\{m,m+1,\ldots,n\}$. A subset $A$ of $[n]$ with $r$ elements is called an $r$-set, and its elements are always listed in increasing order. 
	
	For $A=\{a_1,a_2,\ldots,a_r\}$, with $1\le a_1<a_2<\cdots<a_r\le n$, we define $\min A=a_1$ and $\max A=a_r$. For $1 \le s \le r$, we denote by $\pi_s(A)$ the set $\{a_1,\ldots,a_s\}$. If $\min A > m$, we define $A-m=\{a_1-m,\ldots,a_r-m\}$. Similarly, if $\max A < m$, we define
	$m-A=\{m-a_r,\ldots,m-a_1\}$.
	
	We denote by \(\binom{[n]}{k}\) the family of all \(k\)-subsets of \([n]\). A family \(\mathcal{A} \subseteq \binom{[n]}{k}\) is called a \(k\)-uniform family. For any family \(\mathcal{A} \subseteq 2^{[n]}\), we say that \(\mathcal{A}\) is \textit{intersecting} if \(A \cap B \ne \varnothing\) for all \(A,B \in \mathcal{A}\). Two families \(\mathcal{A},\mathcal{B}\) are called \textit{cross-intersecting} if for every \(A \in \mathcal{A}\) and for every  \(B \in \mathcal{B}\), we have \(A \cap B \ne \varnothing\).
	
	We define a partial order on the \(r\)-subsets, where \(r \le k\), as follows. For two sets \(A,B \in \binom{[n]}{r}\) with \(A=\{a_1,a_2,\ldots,a_r\}\) and \(B=\{b_1,b_2,\ldots,b_r\}\), we define \(A \le B\) if \(a_i \le b_i\) for all \(i=1,2,\ldots,r\). We adopt the  convention that $\emptyset \le A$ for all $A$. We extend this order to the whole power set \(2^{[n]}\) as follows. For two sets \(A=\{a_1,a_2,\ldots,a_r\}\) and \(B=\{b_1,b_2,\ldots,b_s\}\), we define \(A \preceq B\) if \(r \ge s\) and \(a_i \le b_i\) for all \(1 \le i \le s\). Equivalently, $A \preceq B$ if the first $|B|$ elements of $A$ do not exceed the corresponding elements of $B$, that is $\pi_{|B|}(A) \le B$. Note that if \(B \subseteq A\), then  \(A \preceq B\).
	
	A family \(\mathcal{A} \subseteq 2^{[n]}\) is \textit{left-compressed} if for every \(A \in \mathcal{A}\) and every \(B \le A\), we have \(B \in \mathcal{A}\). We say that a \(k\)-set \(A\) is a \textit{maximal element} of a left-compressed intersecting family \(\mathcal{A} \subseteq \binom{[n]}{k}\) if there is no \(B \in \mathcal{A}\) such that \(A \le B\) and $A \neq B$. Thus, if \(\mathcal{A}\) is left-compressed, it can be described completely by listing its maximal elements with respect to this order. A natural example of a left-compressed family is $\mathcal{L}(A)=\{S \in \binom{[n]}{|A|}: S \le A\}$ for $A \in 2^{[n]}$. We also define $\mathcal{L}_{0}(A)=\{S \in \binom{[2,n]}{|A|}: S \le A\}$. 
	
	A family that is both intersecting and left-compressed is called a \textit{left-compressed intersecting family}; for short, we call it an LCIF. The following are some familiar examples of LCIFs
	: The family \textit{star} $S=\{ A\in \binom{[n]}{k}: 1\in A\}$; the family $\mathcal{A}_{2,3}=\{A \in \binom{[n]}{k}:|A\cap \{1,2,3\}|\geq 2\}$; the family \textit{Hilton-Milner}  $\mathcal{H}\mathcal{M}=\{A\in \mathcal{S}:A\cap \{1,2,3\} \neq \emptyset \}$. 
	
	Finally, an intersecting family $\mathcal{A} \subseteq \binom{[n]}{k}$ is called maximal if no $k$-set can be added to $\mathcal{A}$ without destroying the intersecting property. 
	
	\subsection{Left-Compressed Families Generated by Families of Sets}
	
	The main objects studied in this paper are left-compressed families generated by collections of sets. Such families were introduced by Ahlswede and Khachatrian \cite{AK}, and later a variant suitable for the study of left-compressed families was considered by Baber \cite{B}. Let $\mathcal{G}\subseteq2^{[n]}$. The $k$-uniform family on $[n]$ generated by $\mathcal{G}$ is defined by
	
	\[
	\mathcal{F}(n,k,\mathcal{G})
	=
	\left\{
	A\in\binom{[n]}{k}:
	A\preceq G
	\text{ for some }
	G\in\mathcal{G}
	\right\}.
	\]
	
	Throughout the paper, whenever $n$ and $k$ are fixed, we simply write
	$\mathcal{F}(\mathcal{G})$ instead of
	$\mathcal{F}(n,k,\mathcal{G})$. We call \(\mathcal{G}\) a \emph{generating family} and each set \(G \in \mathcal{G}\) a \emph{generating set}. If $G\in\mathcal{G}$ satisfies $|G|>k$, then there is no $k$-set
	$A$ with $A\preceq G$. Therefore, throughout the paper we may assume
	that every generating set has size at most $k$. In the special case where $\mathcal{G}=\{G\}$ consists of a single set, we simply write $\mathcal{F}(G)$. If $|G|=k$, then
	$\mathcal{F}(G)=\mathcal{L}(G)$.
	If $|G|<k$, then $\mathcal{F}(G)
	=\mathcal{L} \bigl( G\cup[n-k+|G|+1,n]
	\bigr).$. We also define $\mathcal{F}_0(G)$ to be the subfamily of $\mathcal{F}(G)$ consisting of all sets that do not contain the
	element $1$.
	
	By definition,
	
	\[
	\mathcal{F}(\mathcal{G})
	=
	\bigcup_{G\in\mathcal{G}}
	\mathcal{F}(G),
	\]
	
	and hence every family of the form
	$\mathcal{F}(\mathcal{G})$
	is left-compressed.
	
	However, determining whether
	$\mathcal{F}(\mathcal{G})$
	is intersecting is considerably more difficult.
	For example, $\mathcal{F}(\{\{2,3\}\})$ and $\mathcal{F}(\{\{1,3\},\{1,4,5\},\{2,3,5\}\}) $
	are intersecting, whereas $\mathcal{F}(\{\{2,4\}\})$ and $\mathcal{F}(\{\{2,3\},\{2,4,5\}\})$ are not.
	
	Bond~\cite{BB} obtained a necessary and sufficient condition for a
	left-compressed family to be intersecting.
	
	\medskip
	
	\noindent
	\textbf{Bond~\cite[Proposition~7]{BB}.}
	A left-compressed family
	$\mathcal{A}\subseteq\binom{[n]}{k}$
	is intersecting if and only if, for every $A=\{a_1,\ldots,a_k\}, B=\{b_1,\ldots,b_k\}
	\in\mathcal{A}$, there exist indices $i$ and $j$ such that $i+j>\max\{a_i,b_j\}$
	
	Bond's criterion completely characterizes intersecting left-compressed
	families. However, it is formulated in terms of the generated family
	itself. Since $\mathcal{F}(\mathcal{G})$ may contain a very large
	number of $k$-sets, applying this criterion may require examining many
	members of the generated family. This naturally raises the following
	question.
	
	\begin{center}
		\emph{Can one determine whether
			$\mathcal{F}(\mathcal{G})$
			is intersecting directly from the generating family
			$\mathcal{G}$?}
	\end{center}
	
	The first objective of this paper is to answer this question. To this
	end, we introduce the following notion. For two nonempty sets
	$G,H\subseteq[n]$
	(not necessarily distinct),
	we say that $G$ and $H$ are \emph{strongly intersecting}, written
	$G\sim_{si}H$, if $G'\cap H'\neq\varnothing$ for every $G'\le G$
	and every $H'\le H$. When $G=H$, we say that $G$ is \emph{self-strongly intersecting}. A family
	$\mathcal{G}\subseteq2^{[n]}$
	is called \emph{strongly intersecting}
	if every pair of its members is strongly intersecting.
	
	Our first theorem completely characterizes when a generated
	left-compressed family is intersecting.
	
	\begin{theorem}\label{thm1}
		Let $\mathcal{G}\subseteq2^{[n]}$.
		Then
		$\mathcal{F}(\mathcal{G})$
		is intersecting if and only if
		$\mathcal{G}$
		is strongly intersecting.
	\end{theorem}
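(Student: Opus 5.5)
We prove the two implications separately. The key tool is that $A\preceq G$ means $\pi_{|G|}(A)\le G$. So every member of $\mathcal{F}(G)$ contains a $|G|$-set $G'\le G$, namely $G'=\pi_{|G|}(A)$. Conversely, every $G'\le G$ with $|G'|=|G|\le k$ can be extended to a $k$-set $A\in\mathcal{F}(G)$ with $\pi_{|G|}(A)=G'$, provided enough room remains above $\max G'$.

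\emph{Sufficiency.} Suppose $\mathcal{G}$ is strongly intersecting, and let $A,B\in\mathcal{F}(\mathcal{G})$. Choose $G,H\in\mathcal{G}$ with $A\preceq G$ and $B\preceq H$. Put $G'=\pi_{|G|}(A)$ and $H'=\pi_{|H|}(B)$. Then $G'\le G$ and $H'\le H$, so $G\sim_{si}H$ gives $G'\cap H'\neq\varnothing$. Since $G'\subseteq A$ and $H'\subseteq B$, we get $A\cap B\neq\varnothing$. This direction is immediate.

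\emph{Necessity.} Suppose $\mathcal{F}(\mathcal{G})$ is intersecting, and take $G,H\in\mathcal{G}$ together with $G'\le G$ and $H'\le H$. We assume $G'\cap H'=\varnothing$ and aim for a contradiction. The plan is to extend $G'$ and $H'$ to $k$-sets $A\supseteq G'$ and $B\supseteq H'$ with the following properties:
\begin{enumerate}[(i)]
\item $\pi_{|G|}(A)=G'$ and $\pi_{|H|}(B)=H'$, so that $A\in\mathcal{F}(G)$ and $B\in\mathcal{F}(H)$;
\item $A\cap B=\varnothing$.
\end{enumerate}
Property (i) requires all added elements of $A$ to exceed $\max G'$, and all added elements of $B$ to exceed $\max H'$. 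The natural choice is to fill from the top of $[n]$. Give $A$ the $k-|G|$ largest elements of $[n]\setminus(G'\cup H')$. Then give $B$ the next $k-|H|$ largest elements of $[n]\setminus(G'\cup H')$ not already used by $A$.

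\emph{Main obstacle.} The hard part is checking that these added elements lie above the relevant maxima and that the total count fits. The count is fine because $|A\cup B|=2k\le n$. For the maxima, we would first try to show that the $2k-|G|-|H|$ largest elements of $[n]\setminus(G'\cup H')$ all exceed $\max(G'\cup H')$. This need not hold directly. For instance, $\max G'$ might be very large, close to $n$, while $|G|$ is small.

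To handle this, we use the freedom to shrink $G'$ and $H'$. Any $G''\le G'$ also satisfies $G''\le G$, and we may replace $G'$ and $H'$ by smaller disjoint sets. Concretely, we take the disjoint pair $(G',H')$ and left-shift it as far as possible while keeping the two sets disjoint. The result is that $G'\cup H'$ occupies an initial segment-like portion of $[n]$ of size $|G|+|H|\le 2k$. Then $\max(G'\cup H')\le |G|+|H|$, and since $n\ge 2k$ the top $2k-|G|-|H|$ elements all lie above it.

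Making this shifting rigorous is the delicate step. The approach is to process the elements of $G'\cup H'$ in increasing order and reassign each to the smallest available value, keeping it in its own set. This preserves the coordinatewise inequalities $G''\le G'$ and $H''\le H'$, preserves disjointness, and yields $G''\cup H''=[|G|+|H|]$. With that in hand, the extension above produces disjoint $A\in\mathcal{F}(G)$ and $B\in\mathcal{F}(H)$, contradicting intersection. The case $G=H$ is covered by the same argument.
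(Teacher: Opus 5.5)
Your proof is correct. The sufficiency half is the same as the paper's; for necessity you take a more direct route.

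In your version, the failure of $G\sim_{si}H$ gives disjoint $G'\le G$ and $H'\le H$. The order-preserving relabeling $u_j\mapsto j$ of $G'\cup H'=\{u_1<\cdots<u_m\}$ is valid because $j\le u_j$. It produces disjoint $G''\le G$, $H''\le H$ with $G''\cup H''=[|G|+|H|]$, which you then pad with disjoint elements of $[|G|+|H|+1,n]$.

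The paper instead goes through Lemma~\ref{main_lem} via the chain (ii)$\Rightarrow$(iii)$\Rightarrow$(iv)$\Rightarrow$(i):
\begin{itemize}
\item Assuming $\mu_G(\ell)+\mu_H(\ell)\le\ell$ for all $\ell$, it builds a disjoint pair below $G$ and $H$ filling $[|G|+|H|]$ by induction on $\ell$.
\item Each element of $G\cap H$ is sent to two consecutive new values; this is where the hypothesis $z_\ell\le\ell$ is needed.
\item It pads the pair exactly as you do.
\item It then converts the numerical criterion into $g_p=h_q=p+q-1$, and that back into strong intersection.
\end{itemize}

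Your version is shorter and avoids the $\mu$-bookkeeping, because an already disjoint witness makes the compression automatic. The paper's detour pays off by producing the arithmetic criteria (iii) and (iv), which it relies on later. They are used in the proof of Bond's criterion (Proposition~\ref{Bond}), the notion of the type of a generator, the relation $G\sim_{si}G_i$, Proposition~\ref{tc2}, and the counting in Theorem~\ref{thm2}.
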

	The characterization in Theorem~\ref{thm1} first appeared in the unpublished preprint~\cite{TTT}. We include it here because it provides the key tool for the results established in the remainder of this paper.
	
	Theorem \ref{thm1} states that if \(\mathcal{F}(\mathcal{G})\) is intersecting, then every set \(G\in\mathcal{G}\) is a self-strongly intersecting set. As will be shown in Lemma \ref{main_lem}, if \(G=\{g_1,\ldots,g_r\}\) is an self-strongly intersecting, then there exists $1 \le s \le r$ such that \(g_s\le 2s-1\). The smallest index \(s\) for which \(g_s \le 2s-1\) is called the type of \(G\) and is denoted by \(t(G)\). Clearly, $t(G) \le |G|$. If $t(G)=s >1$ then it is easy to see that $g_i \ge 2i$ for all $1 \le i \le s=2, g_{s-1}=2s-2$ and $g_{s}=2x-1$. 
	
	We now turn to maximal left-compressed intersecting families (MLCIFs). A natural question is how to construct a maximal left-compressed intersecting family. Baber~\cite{B} showed that every MLCIF on $[n]$ can be obtained by
	extending one on $[2k]$.
	
	\medskip
	
	\noindent
	\textbf{Baber~\cite[Lemma~8]{BB}.}
	Let
	$\mathcal{A}\subseteq\binom{[2k]}{k}$
	be a maximal left-compressed intersecting family.
	Then every $n\ge2k$admits a unique maximal left-compressed intersecting extension of $\mathcal{A}$.Moreover, every maximal left compressed intersecting family on $\binom{[n]}{k}$
	arises in this way.
	
	Our approach is different. Instead of extending an MLCIF on $[2k]$, we
	construct maximal left-compressed intersecting families directly from a
	single self-strongly intersecting set satisfying a special property.
	
	We call a self-strongly intersecting
	$G$ a \emph{generator} if $1\notin G$
	and $|G\cap[k+|G|-1]|=|G|$. A generator \(G\) is called a weak generator if \(t(G)<|G|\), and a strong generator if \(t(G)=|G|\). Note that if \(G\) is a weak generator of type \(s\), then \(G\setminus\{\max G\}\) is also a weak generator of type \(s\).
	
	By Theorem 1.1, every generator \(G\) gives rise to an intersecting family \(\mathcal{F}(G)\). However, \(\mathcal{F}(G)\) is, in general, not maximal. This naturally leads to the problem of extending \(\mathcal{F}(G)\) to a maximal left-compressed intersecting family. We address this problem by introducing a canonical extension \(\mathcal{F}_G\), which forms the basis for the subsequent results of the paper. To begin with, we introduce the following notion. Let $G=\{g_1,\ldots,g_r\}$ be a genererator. For each $1\le i\le r$, define $G_i=\{1\}\cup[i+1,g_i]$. Since
	$i+(g_i-i+1)-1=g_i$,
	Lemma~\ref{main_lem} implies that $G\sim_{si}G_i$ for every $i$. We denote $\mathcal{H}(G)=\{G_1,\ldots,G_r\}$. Note that $\{G\} \cup \mathcal{H}(G)$ is strongly intersecting. So the family $\mathcal{F}(G)\cup \mathcal{F}(\mathcal{H}(G))=\mathcal{F}(G)\cup \mathcal{F}(G_1)\cup \ldots\cup \mathcal{F}(G_r)$ is intersecting. We define $F_G=\mathcal{F}(G)\cup \mathcal{F}(\mathcal{H}(G)$. We are now ready to state our main theorem.
	
	\begin{theorem} \label{thm2}
		Let $G=\{g_1,\ldots,g_r\} \in 2^{[n]}$ be a generator. Then,
		
		(i) the family $\mathcal{F}_{G}=\mathcal{F}(G) \cup \mathcal{F}(\mathcal{H}(G))$ is an MLCIF and
		\[
		|\mathcal{F}_{G}|
		=
		\sum_{A \in \mathcal{L}_{0}(G)}
		\binom{n-\max A}{k-|G|}
		+
		\binom{n-1}{k-1}
		-
		|\mathcal{L}(\pi_{k-1}(n+1-\overline{G}))|.
		\]
		
		(ii) If $G$ is a weak generator then
		\begin{itemize}
			\item If $g_r<2r-2$ then $|\mathcal{F}_G| > |\mathcal{F}_{G\setminus \{g_r\}}|$
			
			\item If $g_r=2r-2$ then $|\mathcal{F}_G| = |\mathcal{F}_{G\setminus \{g_r\}}|$
			
			\item If $g_r>2r-2$ then $|\mathcal{F}_G| <|\mathcal{F}_{G\setminus \{g_r\}}|$
		\end{itemize}
		
		(iii) If $G$ is a strong generator then for every generator $K \le G$, we have $|\mathcal{F}_K| \ge |\mathcal{F}_G|$.
	\end{theorem}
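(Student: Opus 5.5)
We will treat the three parts of Theorem~\ref{thm2} separately, relying throughout on Theorem~\ref{thm1} and on Lemma~\ref{main_lem}. We use Lemma~\ref{main_lem} in the form in which it is applied to the sets $G_i$: a set $\{1\}\cup[a,b]$ strongly intersects $G$ iff $a+(b-a+1)-1\ge g_{a-1}$, or a similar inequality on the $g$'s.

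\emph{Maximality in (i).} The family $\mathcal{F}_G$ is already known to be a left-compressed intersecting family, so only maximality remains. Take a $k$-set $B\notin\mathcal{F}_G$. We show that $\mathcal{F}_G\cup\{B\}$ is not intersecting by exhibiting a member of $\mathcal{F}_G$ disjoint from $B$. We split into two cases.
\begin{itemize}
\item \emph{Case $1\in B$.} Since $B\notin\mathcal{F}(G_i)$ for every $i$, the following holds for each $i$: either $|B\cap[2,g_i]|<g_i-i$, or the ordering condition fails. Using this we produce $A\le G$ with $A\subseteq[2,n]\setminus B$, padded by large elements. Such an $A$ lies in $\mathcal{F}(G)$ and is disjoint from $B$. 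The condition $|G\cap[k+|G|-1]|=|G|$ is what guarantees there is room to pad $A$ to a $k$-set avoiding $B$.
\item \emph{Case $1\notin B$.} Here $B$ fails $B\preceq G$, so some index $s$ has $b_s>g_s$. We build $A=\{1\}\cup(\text{elements of }[2,g_s]\setminus B)\cup(\text{tail})$ and check that $A\in\mathcal{F}(G_j)$ for a suitable $j$ and that $A\cap B=\varnothing$.
\end{itemize}

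\emph{Size formula in (i).} We split $\mathcal{F}_G$ according to whether $1\in A$.
\begin{itemize}
\item \emph{Sets not containing $1$.} These form exactly $\mathcal{F}_0(G)$, because every set in $\mathcal{F}(G_i)$ contains $1$ after compression. Each such $A$ is determined by $\pi_r(A)\in\mathcal{L}_0(G)$ together with a free choice of the remaining $k-r$ elements above $\max\pi_r(A)$. Summing over these first parts gives $\sum_{A\in\mathcal{L}_0(G)}\binom{n-\max A}{k-|G|}$.
\item \emph{Sets containing $1$.} By maximality, $\{1\}\cup C$ belongs to $\mathcal{F}_G$ iff $C$ meets every member of $\mathcal{F}_0(G)$. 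We therefore count the complement: the $(k-1)$-sets $C\subseteq[2,n]$ that are disjoint from some member of $\mathcal{F}_0(G)$. Under the reflection $x\mapsto n+1-x$, the sets $C$ avoiding some $A\le G$ correspond to down-sets generated by the complement $\overline{G}$. We expect these to be exactly $\mathcal{L}(\pi_{k-1}(n+1-\overline{G}))$, which yields the subtracted term.
\end{itemize}
We expect this bijection to be the main obstacle. Our plan is to prove it by a shifting/greedy matching argument: $C$ avoids some $A\le G$ iff the greedy choice of the smallest elements outside $C$ stays below $G$ coordinatewise. This should be equivalent to the coordinatewise comparison of the reflected $C$ against the complement.

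\emph{Part (ii).} We compare the formula for $G$ with the formula for $G'=G\setminus\{g_r\}$. In the first sum, each $A'\in\mathcal{L}_0(G')$ extends to sets in $\mathcal{L}_0(G)$ with last element $x\in(\max A',g_r]$. Pascal's rule $\binom{n-m}{k-r+1}=\sum_{x>m}\binom{n-x}{k-r}$ then shows that the $G'$ sum minus the $G$ sum equals $\sum_{A'}\sum_{x>g_r}\binom{n-x}{k-r}$. The change in the subtracted term is the number of sets in $\mathcal{L}(\pi_{k-1}(n+1-\overline{G'}))$ that are not in $\mathcal{L}(\pi_{k-1}(n+1-\overline{G}))$. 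Both differences count set systems indexed by $\mathcal{L}_0(G')$ and the positions beyond $g_r$. Because $G'$ has type $s<r$, the structure of $\mathcal{L}_0(G')$ is fixed, and the sign of the difference should reduce to comparing $g_r-(r-1)$ with $r-1$, that is, $g_r$ with $2r-2$. We would verify this via an explicit injection between the two collections, which becomes a bijection exactly when $g_r=2r-2$.

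\emph{Part (iii).} Let $K\le G$ with $|K|=|G|$. (If $|K|>|G|$ is allowed, we first reduce to this case using the order $\preceq$.) It suffices to handle one elementary step, lowering a single coordinate $g_i$ by $1$ while remaining a generator, and to iterate. Under such a step, $\mathcal{L}_0$ shrinks by a set of first parts $\mathcal{D}$, while the subtracted term also shrinks. Using the fact that $G$ is strong ($g_i\ge 2i$ for all $i<r$), we show that the loss in the first sum is at most the gain from the smaller subtracted term. The tool is an injection from each lost set in $\mathcal{F}_0$ to a newly admitted set containing $1$, namely its complement-type partner under the reflection used in (i).
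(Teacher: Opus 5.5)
Your maximality argument is correct, and more direct than the paper's. In each case you exhibit a member of $\mathcal{F}_G$ disjoint from the new set $B$: if $1\in B$, take the $k$ smallest elements of $[n]\setminus B$; if $1\notin B$, take $1$ together with the $k-1$ smallest elements of $[2,n]\setminus B$. This gives maximality among all intersecting families. It also lets you describe the members containing $1$ without the paper's separate check that such members of $\mathcal{F}(G)$ already lie in $\mathcal{F}(\mathcal{H}(G))$. One correction: room for padding comes from $n\ge 2k$. The generator condition is actually needed in the case $1\notin B$, to guarantee $\ell_s=g_s-s+1\le k$.

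The counting, however, is never carried out. For the formula in (i), the identification you call the main obstacle is the paper's Lemma~\ref{lm1}, and with your greedy criterion it is short. A set $B\ni 1$ is excluded iff $\mu_B(g_i)\le g_i-i$ for all $i$. These inequalities propagate to $\mu_B(x)\le \mu_{\overline G}(x)$ for every $x$, since $\mu_{\overline G}(x)=x-\mu_G(x)$ for $x\le k+r$. That is exactly $\overline G\le B$, and Lemma~\ref{sizer} then gives the count.

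In (ii), your first difference $|\mathcal{L}_0(G')|\binom{n-g_r}{k-r+1}$ is right, but the second is never computed. Saying that the type ``fixes the structure of $\mathcal{L}_0(G')$'' is not an argument; weakness only ensures that $G'$ is a generator. The missing idea is this: $S\in\mathcal{F}(G_r)\setminus\mathcal{F}(\mathcal{H}(G'))$ iff $\pi_{\ell_r}(S)=[g_r]\setminus A'$ for a unique $A'\in\mathcal{L}_0(G')$, with the remaining $k-\ell_r$ elements in $[g_r+1,n]$. Hence this difference equals $|\mathcal{L}_0(G')|\binom{n-g_r}{k-g_r+r-1}$. Note also that for $n=2k$ the two binomials coincide for every $g_r$, consistent with every maximal intersecting family on $[2k]$ having $\binom{2k-1}{k-1}$ members. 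So your ``bijection exactly when $g_r=2r-2$'', and the strict inequalities of (ii) themselves, require $n>2k$.

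The most serious gap is in (iii). Iterating one-coordinate decrements forces you through intermediate sets that are only weak generators. For instance, going from $\{2,4,5\}$ down to $\{2,3,4\}$ you must pass through $\{2,3,5\}$. So ``using that $G$ is strong'' is unavailable after the first step. Moreover, the property you single out, $g_i\ge 2i$ for $i<r$, is not what makes the inequality hold. A single-step inequality for arbitrary generators is in fact false. For $n=12$, $k=5$ one computes $|\mathcal{F}_{\{2,3,6\}}|=64+210=274$ and $|\mathcal{F}_{\{2,3,7\}}|=74+205=279$, although $\{2,3,6\}\le\{2,3,7\}$.

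What drives the paper's proof is the bound $a_s\le g_s=2s-1$ for every lost first part $A\in\mathcal{L}_0(G)\setminus\mathcal{L}_0(K)$. The lost class $\{S:\pi_s(S)=A\}$ has $\binom{n-a_s}{k-s}$ members. It is matched with $\{([a_s]\setminus A)\cup S' : S'\in\binom{[a_s+1,n]}{k-a_s+s}\}$, which has three properties:
\begin{enumerate}
\item it lies in $\mathcal{F}(\mathcal{H}(K))\setminus\mathcal{F}(\mathcal{H}(G))$;
\item it is disjoint from the corresponding family for any $B\ne A$;
\item it is at least as large as the lost class precisely because $a_s\le 2s$ (Lemma~\ref{comp}).
\end{enumerate}
You can either compare $K$ with $G$ directly in this way, as the paper does, or keep your chain but carry the invariant $\max H\le 2s-1$ instead of strongness. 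Every $H\le G$ inherits that invariant.
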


	Our paper is divided into three parts. The first part provides an overview of the fundamental concepts and properties, as well as outlines the structure of the paper. In the second section, we discuss LCIFs. More specifically, we prove Theorem \ref{thm1} which establishes the necessary and sufficient conditions for the family $\mathcal{F}(\mathcal{G})$ to be an LCIF. In addition, we provide an alternative proof of Bond's condition \cite{BB} that does not rely on Frankl's result \cite{Fr87}. In the third section, we present the proof of Theorem \ref{thm2} concerning the construction of the MLCIF \(F_G\) and several estimates related to its size.
	
	\section{Left Compressed Intersecting Families }
	\subsection{Preliminaries}
	We proceed to establish several supporting properties that will be used in the proof of Theorem \ref{thm1}. We need a function that describes the relationship between a set $X$ and an element $\ell$. For a non-empty set $X$ and a positive integer $\ell$, we define $\mu_{X}(\ell)$ as the number of elements in $X$ that are less than or equal to $\ell$. In other words, $\mu_{X}(\ell)=|X \cap [\ell]|$. We note that for any $X \subseteq [n]$, we have $0 \leq \mu_{X}(\ell) \leq |X|,$ and for fixed $X$, $\mu_{X}(\ell)$ is a non-decreasing function in $\ell$. For $\ell \geq 2$, we also have $\mu_{X}(\ell) - \mu_{X}(\ell-1) \in \{0,1\}$. Next, we establish some additional properties of the function $\mu_{X}(\ell)$ that relate to two sets $A,B \in 2^{[n]}$.
	
	\begin{prop} \label{tc} Let $A,B \subseteq [n]$ be non-empty subsets. Then, for all $\ell \in [n]$, the following hold: \\
		(i) If $A \preceq B$ then $\mu_{A} (\ell) \geq \mu_{B}(\ell)$ ;\\
		(ii) If $A \leq B$ then $\mu_{A}(\ell) \geq \mu_{B}(\ell)$;\\
		(iii) If $A \subseteq B $ then $\mu_{B}(\ell) \geq \mu_{A}(\ell)$.
	\end{prop}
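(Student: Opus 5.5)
Part (iii) is immediate: if \(A\subseteq B\), then \(A\cap[\ell]\subseteq B\cap[\ell]\), so \(\mu_A(\ell)\le\mu_B(\ell)\). Part (ii) is the special case of (i) in which \(|A|=|B|\), since then \(A\le B\) is exactly \(A\preceq B\). So the work is in (i), and it reduces to a single counting statement about the first \(s=|B|\) elements of \(A\).

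For (i), write \(A=\{a_1<\dots<a_r\}\) and \(B=\{b_1<\dots<b_s\}\) with \(r\ge s\) and \(a_i\le b_i\) for \(1\le i\le s\). Fix \(\ell\in[n]\) and put \(m=\mu_B(\ell)\). If \(m=0\) there is nothing to prove. Otherwise, because the elements of \(B\) are listed increasingly, the elements of \(B\) that are at most \(\ell\) are exactly \(b_1,\dots,b_m\). In particular \(b_m\le\ell\).

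Then for each \(i\le m\) we have \(a_i\le b_i\le b_m\le\ell\), so \(a_1,\dots,a_m\) are \(m\) distinct elements of \(A\cap[\ell]\). Here \(m\le s\le r\), so these elements exist. Hence \(\mu_A(\ell)\ge m=\mu_B(\ell)\).

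An equivalent route is to note that \(A\supseteq\pi_s(A)\) with \(\pi_s(A)\le B\). Applying (iii) to \(\pi_s(A)\subseteq A\) and then the equal-size case to \(\pi_s(A)\le B\) gives the same conclusion.

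The only point needing care is the identification \(B\cap[\ell]=\{b_1,\dots,b_m\}\), together with the requirement \(r\ge s\) so that \(a_m\) is defined. The latter comes directly from the definition of \(\preceq\), so I expect no real obstacle. I would also mention that the convention \(\emptyset\le A\) is irrelevant here, since \(A\) and \(B\) are assumed nonempty.
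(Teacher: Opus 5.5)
Your proposal is correct and follows essentially the paper's argument: for (i) the paper also sets $p=|B\cap[\ell]|$ and uses $a_i\le b_i\le \ell$ for $i\le p$ (wrapped in a case split on $\ell$ relative to $b_1$ and $b_s$ that your version rightly shows is unnecessary), and it derives (ii) from (i) exactly as you do. The only cosmetic difference is in (iii): you prove it directly from $A\cap[\ell]\subseteq B\cap[\ell]$, whereas the paper observes $B\preceq A$ and applies (i).
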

	\begin{proof}
		(i) Assume that $A=\{a_{1},a_{2},\ldots,a_{r}\}$ and $B=\{b_{1},b_{2},\ldots,b_{
			s}\}$. Since $A \preceq B$, we have $r \geq s$ and\\ $\{a_{1},a_{2},\ldots,a_{s}\} \leq \{b_{1},b_{2},\ldots,b_{s}\}$.
		Now, we consider different cases for $\ell \in [n]$. If $\ell \geq b_{s} $ then $\mu_{B}(\ell)=s $. Since $a_{s} \leq b_{s} \leq \ell$, we also have $\mu_{A}(l) \geq s =\mu_{B}(l)$. If $\ell < b_{1}$ then $\mu_{B}(\ell) =0$, which proves the required result. Finally, consider $b_{1} \leq \ell < b_{s}$. Assume $|B \cap [\ell]|=p$. This means that there are p elements $b_{1},\ldots,b_{p}$ that do not exceed $\ell$. Since $a_{1} \leq b_{1},\ldots,a_{p} \leq b_{p} \leq \ell$, we conclude that there are at least $p$ numbers $a_{1},\ldots,a_{p}$ that do not exceed $\ell$. Thus $\mu_{A}(\ell) \geq p =\mu_{B}(\ell)$.
		
		(ii) Since  $A\leq B$, we conclude that $A \preceq B$ and from (i), we have $\mu_{A}(\ell) \geq \mu_{B}(\ell)$.
		
		(iii) Since $A \subseteq B$, we have $B \preceq A$. From (i), we get $\mu_{B}(\ell) \geq \mu_{A}(\ell)$ for all $1 \leq \ell \leq n$.
	\end{proof}
	
	The following lemma plays an important role in the proof of Theorem \ref{thm1} and also concerns the strongly intersecting property of two sets.
	
	\begin{lem}\label{main_lem} Let $G,H \in 2^{[n]}$. Then, the following statements are equivalent:
		
		(i) $G \sim_{si} H$;
		
		(ii) $\mathcal{F}(G)$ and $\mathcal{F}(H)$ are cross-intersecting;
		
		(iii) There exists $1 \leq \ell \leq n$ such that $\mu_{G}(\ell)+\mu_{H}(\ell) > \ell$;
		
		(iv) There exist $1 \le p \le |G|$ and $1 \le q \le |H|$ such that $g_p=h_q=p+q-1$.
	\end{lem}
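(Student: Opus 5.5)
The plan is to prove the cycle of implications (i)$\Rightarrow$(ii)$\Rightarrow$(iii)$\Rightarrow$(iv)$\Rightarrow$(i), working throughout under the paper's standing assumptions $|G|,|H|\le k$ and $2k\le n$.

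\textbf{(i)$\Rightarrow$(ii).} Let $A\in\mathcal{F}(G)$ and $B\in\mathcal{F}(H)$. By definition $A\preceq G$, i.e.\ $G':=\pi_{|G|}(A)\le G$, and similarly $H':=\pi_{|H|}(B)\le H$. Strong intersection gives $G'\cap H'\ne\varnothing$. Since $G'\subseteq A$ and $H'\subseteq B$, we get $A\cap B\neq\varnothing$.

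\textbf{(iii)$\Rightarrow$(iv).} Choose the smallest $\ell$ with $\mu_G(\ell)+\mu_H(\ell)>\ell$. If $\ell=1$, then both counts equal $1$, so $g_1=h_1=1$ and we take $p=q=1$. If $\ell\ge 2$, minimality gives $\mu_G(\ell-1)+\mu_H(\ell-1)\le \ell-1$. Each of $\mu_G,\mu_H$ increases by at most $1$ from $\ell-1$ to $\ell$, so both must increase, and the sum at $\ell$ is exactly $\ell+1$. Hence $\ell\in G\cap H$, say $\ell=g_p=h_q$ with $p=\mu_G(\ell)$ and $q=\mu_H(\ell)$. Then $p+q=\ell+1$, i.e.\ $g_p=h_q=p+q-1$.

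\textbf{(iv)$\Rightarrow$(i).} Take any $G'\le G$ and $H'\le H$. The first $p$ elements of $G'$ are at most $g_p=p+q-1$, and the first $q$ elements of $H'$ are at most $h_q=p+q-1$. These are $p+q$ elements lying in $[p+q-1]$, so by pigeonhole two of them coincide, and $G'\cap H'\neq\varnothing$.

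\textbf{(ii)$\Rightarrow$(iii).} This is the main step, and I would prove the contrapositive. Assume $\mu_G(\ell)+\mu_H(\ell)\le\ell$ for every $\ell$. I will build disjoint sets $G'\le G$ and $H'\le H$ by a scheduling argument. Regard each element $g_i$ and $h_j$ as a job with deadline equal to its value, and assign the integers $1,2,\dots$ one at a time by earliest deadline first: at time $\ell$, give $\ell$ to the pending job (the next unassigned index of $G$ or of $H$) with the smaller deadline. The standard argument for earliest-deadline-first scheduling shows that every job meets its deadline exactly when, for each $\ell$, the number of deadlines $\le\ell$ is at most $\ell$; this is precisely our hypothesis. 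Concretely, if a job first misses its deadline $\ell$, take the last idle time, or the last time a job with deadline $>\ell$ was served, before $\ell$. Counting from that point produces an interval containing more deadlines than slots, which contradicts the hypothesis, using that every deadline is at least the current time.

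The jobs of $G$ are served in increasing order of index, so they receive increasing values; call the resulting set $G'$. Each $g_i$ is assigned a value $\le g_i$, so $G'\le G$. Likewise $H'\le H$, and $G'\cap H'=\varnothing$ by construction. Now extend: $|[n]\setminus(G'\cup H')|\ge 2k-|G|-|H|$, so we can add $k-|G|$ new elements to $G'$ to form $A$, and $k-|H|$ further, different elements to $H'$ to form $B$. Adding elements only decreases the $i$-th smallest element, so $\pi_{|G|}(A)\le G'\le G$, giving $A\in\mathcal{F}(G)$, and similarly $B\in\mathcal{F}(H)$. Since $A\cap B=\varnothing$, the families $\mathcal{F}(G)$ and $\mathcal{F}(H)$ are not cross-intersecting.

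The obstacle is the rigorous verification of the earliest-deadline-first feasibility claim. Alternatively, it can be phrased as Hall's theorem applied to the bipartite graph joining jobs to the slots $\le$ their deadlines, together with an exchange argument that makes the matched values within $G$ (and within $H$) increasing.
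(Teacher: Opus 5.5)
Your proposal is correct and follows the paper's proof: you use the same cycle of implications, argue (i)$\Rightarrow$(ii), (iii)$\Rightarrow$(iv) and (iv)$\Rightarrow$(i) identically, and prove (ii)$\Rightarrow$(iii) by contraposition through disjoint $G'\le G$, $H'\le H$ that are then padded out to $k$-sets. Since all jobs are available from time $1$, your earliest-deadline-first schedule never idles and is exactly the paper's inductive construction, assigning $1,2,\dots,|G|+|H|$ to the elements of $G\uplus H$ in increasing order; so the feasibility step you flag as the obstacle is immediate, because the $j$-th job served has some deadline $d$ with $j\le\mu_G(d)+\mu_H(d)\le d$.
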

	\begin{proof} Assume that $G=\{g_1,\ldots,g_r\}$ and $H=\{h_1,\ldots,h_s\}$.
		
		$(i) \Rightarrow (ii)$
		
		Take $A \in \mathcal{F}(G), B \in \mathcal{F}(H)$. Then, $A \preceq G$ and $B \preceq H$. Let $A'$ be the set of the first $|G|$ elements of $A$ and $B'$ be the set of the first $|H|$ elements of $B$. Then, $A' \leq G, B' \leq H$. Since $G \sim_{si} H$, we get $A' \cap B' \neq \emptyset$ and therefore $A \cap B \neq \emptyset$.
		Thus $\mathcal{F}(G)$ and $\mathcal{F}(H)$ are cross-intersecting.\\ \\
		$(ii) \Rightarrow (iii)$\\
		Suppose that $\mathcal{F}(G)$ and $\mathcal{F}(H)$ are cross-intersecting. For simplicity, for each $ 1 \le \ell \le n$, we denote $x_{\ell}=\mu_{G}(\ell), y_{\ell}=\mu_{H}(\ell), z_{\ell}=x_{\ell}+y_{\ell}$. We have the following simple observations. We have $x_{\ell+1}=x_{\ell}+1$ if $\ell+1 \in G$ and $x_{\ell+1}=x_{\ell}$ if $\ell+1 \notin G$. Similarly, we also have $y_{\ell+1}=y_{\ell}+1$ if $\ell+1 \in H$ and $y_{\ell+1}=y_{\ell}$ if $\ell+1 \notin H$. Therefore, $z_{\ell+1}-z_{\ell} \in \{0,1,2\}$.\\
		Assume, for a contradiction, that $\mu_{G}(\ell)+\mu_{H}(\ell) \leq \ell$ for all $1 \leq \ell \leq n$. Thus, $z_{l} \leq \ell$, for all $1 \leq \ell \leq n$. By induction, for each $\ell$, we will construct the sets $G_{\ell}$ and $H_{\ell}$ satisfying simultaneously the following conditions: $G_{\ell} \leq G \cap [\ell], H_{\ell} \leq H \cap [\ell], G_{\ell} \cap H_{\ell}= \emptyset$ and $G_{\ell} \cup H_{\ell}=[z_{\ell}]$.\\ 
		
		Consider $\ell=1$. We have $z_{1} \leq 1$. So $z_{1}=0$ or $z_{\ell}=1$. If $z_{\ell}=0$ then we take $G_{1}=H_{1}= \emptyset$ and we see that the above conditions are trivially satisfied. We consider the case where $z_{1}=x_{1}+y_{1}=1$. We may assume that $x_{1}=1$ and $y_{1}=0$. This means that $1 \in G$ and $1 \notin H$. We take $G_{1}=\{1\} \leq G \cap [1]$ and $H_{1}= \emptyset \leq H \cap [1]$. The above conditions are also satisfied.\\
		Assume that we have constructed two sets $G_{\ell}$ and $H_{\ell}$ that satisfy the conditions $G_{\ell} \leq G \cap [\ell], H_{\ell} \leq H \cap [\ell], G_{\ell} \cap H_{\ell}= \emptyset$ and $G_{\ell}\cup H_{\ell}=[z_{\ell}]$. We will build the sets $G_{\ell+1}$ and $H_{\ell+1}$. There are three possible cases. 
		
		\textbf{ The case of $z_{\ell+1}=z_{\ell}$}\\
		We have $\ell+1 \notin G\cup H$. We set $G_{\ell+1}=G_{\ell}$ and $H_{\ell+1}=H_{\ell}$. We have, by the induction hypothesis, $G_{\ell+1}=G_{\ell} \leq G \cap [\ell]= G \cap [\ell+1]$ and $H_{\ell+1}=H_{\ell} \leq H \cap [\ell]= H \cap [\ell+1]$. Furthermore, $G_{\ell+1} \cap H_{\ell+1}= G_{\ell} \cap H_{\ell}= \emptyset$ and $G_{\ell+1} \cup H_{\ell+1}=G_{\ell} \cup H_{\ell}=[z_{\ell}]=[z_{\ell+1}]$. 
		
		\textbf{ The case of $z_{\ell+1}=z_{\ell}+1$}\\
		We have $x_{\ell+1}=x_{\ell}+1, y_{\ell+1}=y_{\ell}$ or $y_{\ell+1}=y_{\ell}+1, x_{\ell+1}=x_{\ell}$. Without loss of generality, we can see that $x_{\ell+1}=x_{\ell}+1, y_{\ell+1}=y_{\ell}$. Thus, $\ell+1 \in G$ and $\ell+1 \notin H$. We set $G_{\ell+1}=G_{\ell} \cup \{z_{\ell+1}\}$ and $H_{\ell+1}=H_{\ell}$. According to the induction hypothesis, $G_{\ell} \leq G \cap [\ell]$ and $H_{\ell} \leq H \cap [\ell]$. Note that $z_{\ell+1} \leq \ell+1$. It implies $G_{\ell+1}=G_{\ell} \cup \{z_{\ell+1}\} \leq G\cap [\ell+1]$ and $H_{\ell+1}=H_{\ell} \leq H\cap [\ell]=H \cap [\ell+1]$.   Also, by the induction hypothesis, we have $G_{\ell+1} \cap H _{\ell+1}= (G_{\ell} \cup \{z_{\ell+1}\}) \cap H_{\ell}=(G_{\ell} \cap H_{\ell})\cup (H_{\ell} \cap \{z_{\ell+1}\})=\emptyset$ since $z_{\ell+1} \notin H_{l}$. We also have $G_{l+1} \cup H_{\ell+1}=(G_{\ell} \cup H_{\ell}) \cup \{z_{\ell+1}\}=[z_{\ell}] \cup \{z_{\ell+1}\}=[z_{\ell+1}]$.
		
		\textbf{ The case of $z_{\ell+1}=z_{\ell}+2$}\\
		In this case, we have $x_{\ell+1}=x_{\ell}+1$ and $ y_{\ell+1}=y_{\ell}+1$. So $\ell+1 \in G \cap H$. We set $G_{\ell+1} =G_{\ell} \cup \{z_{\ell}+1\}$ and $H_{\ell+1} =H_{\ell} \cup \{z_{\ell}+2\}$. We check all the conditions. First, $G_{\ell+1} \cup H_{\ell+1} = (G_{\ell} \cup \{z_{\ell+1}-1\}) \cup (H_{\ell} \cup \{z_{\ell+1}\})= (G_{\ell} \cup H_{\ell}) \cup \{z_{\ell+1}-1\}  \cup \{z_{\ell+1}\}=[z_{\ell}]\cup \{z_{\ell+1}-1\} \cup\{z_{\ell+1}\}=[z_{\ell+1}]$. Next, since $G_{\ell} \cap H_{\ell}= \emptyset$, we have $G_{\ell+1} \cap H_{\ell+1}= (G_{\ell} \cup \{z_{\ell}+1\}) \cap (H_{\ell} \cup \{z_{\ell}+2\})= \emptyset$. Finally, since $\ell+1 \in G \cap H$ and $G_{\ell} \leq G \cap [\ell]$, we have $G_{\ell+1}=G_{\ell} \cup \{z_{\ell}+1\} \leq (G \cap [\ell]) \cup \{\ell+1\}=G \cap [\ell+1]$. Similarly, we also have $H_{\ell+1} \leq H \cap [\ell+1]$.
		
		Now, we let $m = \max \{g_{r}, h_{s}\}$. Since $G \cap [m] = G$ and $H \cap [m]=H$, we have $x_m=\mu_G(m)=|G|$ and $ y_m=\mu_H(m)=|H|$. Applying the above construction for $\ell=m$, we get two sets $G_{m}, H_{m}$ satisfying the conditions: $G_{m} \leq G\cap[m]=G, H_{m} \leq H\cap[m]=H, G_{m} \cup H_{m}=[z_m]$ and $G_{m} \cap H_{m}=\emptyset$. We also have $z_m=x_m+y_m=|G|+|H|$. Next, we take two sets $K,J \subseteq[z_m+1,n]$ such that $|K|=k-|G|, |J|=k-|H|$ and $K \cap J=\emptyset$ (it is possible to choose two sets $K$ and $J$ because $(k-|G|)+(k-|H|)=2k -(|G|+|H|)=2k-z_m  \leq n-z_m )$. We define $A=G_{m} \cup K$ and $B=H_{m} \cup J$. We have $|A|=|B|=k$, $A \preceq G$ and $B \preceq H$. Thus, $A \in \mathcal{F}(G)$ and $B \in \mathcal{F}(H)$. Since $\mathcal{F}(G)$ and $\mathcal{F}(H)$ are cross-intersecting, we have $A \cap B \neq \emptyset$.
		However, this leads to a contradiction, since by the construction of the sets $A$ and $B$, we clearly have $A \cap B =\emptyset$. Thus, the above assumption is false. Therefore, there exists
		\(1\le \ell\le n\) such that $\mu_{G}(\ell)+\mu_{H}(\ell) > \ell$ as desired.
		
		$(iii) \Rightarrow (iv)$ Assume that there is $\ell$ such that $\mu_{G}(\ell)+\mu_{H}(\ell) > \ell$. We choose $\ell$ to be the smallest such integer. If $\ell = 1$, then $g_{1} = h_{1} = 1$, and we choose $p = q = 1$. Consider $\ell > 1$. Since $\ell$ is chosen as the smallest such number, we have 
		\[
		\mu_{G}(\ell-1) + \mu_{H}(\ell-1) \le \ell - 1 < \ell < \mu_{G}(\ell) + \mu_{H}(\ell).
		\]
		Furthermore, note that 
		\[
		\mu_{G}(\ell) - \mu_{G}(\ell-1) \in \{0,1\}, \qquad 
		\mu_{H}(\ell) - \mu_{H}(\ell-1) \in \{0,1\}.
		\]
		From these observations, we derive the equalities
		\[
		\mu_{G}(\ell) = \mu_{G}(\ell-1) + 1,\qquad \mu_{H}(\ell)= \mu_{H}(\ell-1) + 1.
		\]
		\[
		\mu_{G}(\ell-1) + \mu_{H}(\ell-1) = \ell - 1,\qquad
		\mu_{G}(\ell) + \mu_{H}(\ell)= \ell + 1.
		\]
		Noting that 
		\[
		\mu_{G}(\ell) + \mu_{H}(\ell) = \mu_{G}(\ell-1) + \mu_{H}(\ell-1) + 2,
		\]
		we conclude that $\ell \in G \cap H$. Thus, there exist indices $p, q$ such that $g_{p} = h_{q} = \ell$. Since $\mu_{G}(\ell) = p$ and $\mu_{H}(\ell) = q$, we obtain $\ell = p + q - 1$ and hence $g_{p} = h_{q} = p + q - 1$.
		
		$(iv) \Rightarrow (i)$
		
		Assume that $1 \le p \le r$ and $1 \le q \le s$ such that $g_p=h_q=p+q-1$. Take any $G' \leq G$ and $H' \leq H$. We have $\pi_p(G') \subseteq [p+q-1]$ and $\pi_q(H') \subseteq [p+q-1]$. Since $|\pi_p(G')|+|\pi_q(H')| = p+q > [p+q-1]| $, we have $\pi_p(G')\cap \pi_q(H') \neq \emptyset$. Hence, $G' \cap H' \neq \emptyset$. 
	\end{proof}
	
	\subsection{Proof of Theorem \ref{thm1}}
	
	\begin{proof} Suppose $\mathcal{F}(\mathcal{G})$ is intersecting. We take any $G,H \in \mathcal{G}$. We have $\mathcal{F}(G)$ and $\mathcal{F}(H)$ are cross-intersecting. Lemma \ref{main_lem} gives us that $G \sim_{si} H$. It implies that $\mathcal{G}$ is strongly intersecting. Conversely, suppose that $\mathcal{G}$ is strongly intersecting. We prove that $\mathcal{F}(\mathcal{G})$ is LCIF. Take $A,B \in \mathcal{F}(\mathcal{G})$. Then, there are $G,H \in \mathcal{G}$ such that $A \preceq G$ and $B \preceq H$. We have $\pi_{|G|}(A) \leq G$ and $\pi_{|H|}(B) \leq H$. From $G \sim_{si} H$, we conclude that $\pi_{|G|}(A)\cap \pi_{|H|}(B) \neq \emptyset$. Since $\pi_{|G|}(A) \subseteq A$ and $\pi_{|H|}(B) \subseteq B$, we have $A \cap B \neq \emptyset$.
	\end{proof}
	
	Now, let \(\mathcal{A} \subseteq \binom{[n]}{k}\) be a left-compressed family. Then $\mathcal{A}$ can be described in terms of its maximal elements. Let $\mathcal{G}$ be the collection of all maximal elements of $\mathcal{A}$. Note that each $G \in \mathcal{G}$ is a $k$-set. For any $A \in \mathcal{A}$, there exists $G \in \mathcal{G}$ such that $A \leq G$. This implies $A \preceq G$ and so $A \in  \mathcal{F}(\mathcal{G})$. Conversely, take any $A \in \mathcal{F}(\mathcal{G})$. We have $A \preceq G$ for some $G \in \mathcal{G}$. Since both $A$ and $G$ are $k$-sets, it follows that $A \leq G$. Since $\mathcal{A}$ is left-compressed and $G \in \mathcal{A}$, we have $A \in \mathcal{A}$. Hence, $\mathcal{A}=\mathcal{F}(\mathcal{G})$. We use this result together with Theorem \ref{thm1} to give another proof of the intersecting condition for left-compressed families established by Bond \cite{BB} 
	
	\begin{prop}(\textbf{Bond \cite{BB} Proposition 7}) \label{Bond}.Let $\mathcal{A} \subseteq \binom{[n]}{k}$ be a left-compressed family. Then, $\mathcal{A}$ is intersecting if and only if for any $A=\{a_{1},\ldots,a_{k}\}$ and $B=\{b_{1},\ldots,b_{k}\}$ in $\mathcal{A}$, there exist $i,j$ with $1 \le i,j \le k$ such that $ i+j > \max\{a_i,b_j\}$.
	\end{prop}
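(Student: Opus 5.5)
We will deduce Bond's criterion from Theorem~\ref{thm1} and Lemma~\ref{main_lem}. The first step is the observation made just before the statement: if $\mathcal{G}$ is the collection of maximal elements of $\mathcal{A}$, then $\mathcal{A}=\mathcal{F}(\mathcal{G})$. By Theorem~\ref{thm1}, $\mathcal{A}$ is intersecting if and only if $\mathcal{G}$ is strongly intersecting. We then use the equivalence (i)$\Leftrightarrow$(iv) of Lemma~\ref{main_lem} to restate this condition in terms of indices.

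For the direction ``$\Leftarrow$'', assume the index condition holds for every pair in $\mathcal{A}$. In particular it holds for every pair $G,H\in\mathcal{G}$, since $\mathcal{G}\subseteq\mathcal{A}$. So there are $i,j$ with $g_i,h_j\le i+j-1$. We then argue exactly as in (iv)$\Rightarrow$(i) of Lemma~\ref{main_lem}. For $G'\le G$ and $H'\le H$ we have $\pi_i(G')\subseteq[g_i]\subseteq[i+j-1]$ and $\pi_j(H')\subseteq[i+j-1]$. These two sets have $i+j$ elements in total inside $[i+j-1]$, so they meet. Hence $G\sim_{si}H$, and Theorem~\ref{thm1} shows $\mathcal{A}$ is intersecting. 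A simpler route also works: apply the same pigeonhole argument directly to $A,B\in\mathcal{A}$. The sets $\{a_1,\dots,a_i\}$ and $\{b_1,\dots,b_j\}$ both lie in $[i+j-1]$, so they intersect.

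For the direction ``$\Rightarrow$'', assume $\mathcal{A}$ is intersecting and take $A,B\in\mathcal{A}$. The family $\mathcal{A}$ is left-compressed, so $\mathcal{L}(A)\subseteq\mathcal{A}$ and $\mathcal{L}(B)\subseteq\mathcal{A}$. Since $A$ and $B$ are $k$-sets, $\mathcal{F}(A)=\mathcal{L}(A)$ and $\mathcal{F}(B)=\mathcal{L}(B)$. These two families are therefore cross-intersecting, and Lemma~\ref{main_lem} (ii)$\Rightarrow$(iv) gives indices $p,q$ with $a_p=b_q=p+q-1$. Taking $i=p$ and $j=q$ gives $\max\{a_i,b_j\}=i+j-1<i+j$, as required.

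We expect the main obstacle to be the forward direction. There Lemma~\ref{main_lem} is applied to $A$ and $B$ themselves rather than to generators, and this requires checking that $\mathcal{F}(A)$ and $\mathcal{F}(B)$ really sit inside $\mathcal{A}$. That step is exactly where left-compressedness is used. Everything else is the pigeonhole argument already carried out in Lemma~\ref{main_lem}.
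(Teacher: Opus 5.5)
Your proof is correct. For the converse, your ``simpler route'' is exactly the paper's argument: assume $a_i\le b_j$, so $\{a_1,\ldots,a_i\}$ and $\{b_1,\ldots,b_j\}$ together give $i+j$ elements inside $[b_j]\subseteq[i+j-1]$. Your detour through $\mathcal{G}$ and Theorem~\ref{thm1} is valid but not needed. The forward direction is where you genuinely differ. The paper passes to maximal elements $G\ge A$ and $H\ge B$ of $\mathcal{A}$. It uses Theorem~\ref{thm1} to get $G\sim_{si}H$ and takes $\ell$ with $\mu_G(\ell)+\mu_H(\ell)>\ell$ from Lemma~\ref{main_lem}(iii). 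It then pushes this inequality down to $A,B$ via Proposition~\ref{tc}(ii) and sets $i=\mu_A(\ell)$, $j=\mu_B(\ell)$. You instead treat $A$ and $B$ as their own generators. Since $|A|=|B|=k$, left-compressedness gives $\mathcal{F}(A)=\mathcal{L}(A)\subseteq\mathcal{A}$ and $\mathcal{F}(B)=\mathcal{L}(B)\subseteq\mathcal{A}$. So Lemma~\ref{main_lem} (ii)$\Rightarrow$(iv) applies directly; its construction only needs $|A|+|B|=2k\le n$, which is the standing assumption. This bypasses the decomposition $\mathcal{A}=\mathcal{F}(\mathcal{G})$, Theorem~\ref{thm1} and the monotonicity of $\mu$ altogether, and it produces the sharper witness $a_i=b_j=i+j-1$. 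In fact your opening reduction to $\mathcal{G}$ plays no role in your final argument, which shows that Bond's criterion follows from Lemma~\ref{main_lem} alone. The paper's route, in exchange, fits its stated aim of deriving the criterion from Theorem~\ref{thm1}. Also, its threshold $\ell$ depends only on the pair $(G,H)$ of maximal elements, so a single $\ell$ works simultaneously for all $A\le G$ and $B\le H$.
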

	\begin{proof} Since \(\mathcal{A}\) is left-compressed, by the above observation we have \(\mathcal{A}=\mathcal{F}(\mathcal{G})\), where \(\mathcal{G}\) is the family of all maximal elements of \(\mathcal{A}\). 
		
		Assume that $\mathcal{A}$ is intersecting. Take any $A,B \in \mathcal{A}$, then $A \leq G$ and $B \leq H$ for some $G,H \in \mathcal{G}$. Since $\mathcal{A}$ is intersecting, it follows from Theorem \ref{thm1} that \(\mathcal{G}\) is strongly intersecting. So, there exists $\ell$ such that $\mu_{G}(\ell)+\mu_{H}(\ell) >\ell$. From Proposition \ref{tc}, we have $\mu_{A}(\ell) \geq \mu_{G}(\ell)$ and $\mu_{B}(\ell) \geq \mu_{H}(\ell)$. Therefore $\mu_{A}(\ell)+\mu_{B}(\ell) > \ell$. Let \(i=\mu_A(\ell)\) and \(j=\mu_B(\ell)\). Then
		\(1\le i,j\le k\), and we have \(a_i\le \ell\) and \(b_j\le \ell\).
		Hence $i+j>\ell\ge \max\{a_i,b_j\}$.
		
		For converse direction, suppose that the stated condition holds. Take $A,B \in \mathcal{A}$, then there exists $1\le i,j\le k$ such that $i+j > \max\{a_i,b_j\}$. We may assume that $a_i \le b_j$. Then $i+j > b_j$. Moreover, $\{a_1,\ldots,a_i\} \subseteq [b_j]$ and $\{b_1,\ldots,b_j\} \subseteq [b_j]$. So, we must have $A \cap B \neq \emptyset$. We conclude that $\mathcal{A}$ is intersecting.
		
	\end{proof}

	\section{Maximal left compressed intersecting families}
	\subsection{Preliminaries}
	Theorem \ref{thm1} establishes that $\mathcal{F}(\mathcal{G})$ is LCIF if and only if $\mathcal{G}$ is strongly intersecting. In the case where $\mathcal{F}(\mathcal{G})$ is MLCIF, we have the following propositions.
	
	\begin{proposition} \label{tc2} Suppose $\mathcal{G} \subseteq 2^{[n]}$ and 
		$\mathcal{F}(\mathcal{G})$ is an MLCIF. Then:
		\begin{enumerate}[(i)]
			\item $|G \cap [\,k + |G| - 1\,]| = |G|$ for every $G \in \mathcal{G}$;
			\item $\mathcal{G} \cap \binom{[n]}{r}$ is left-compressed for all $1 \le r \le k$;
			\item $|\mathcal{G}| \le \binom{2k - 1}{k - 1}$;
		\end{enumerate}
	\end{proposition}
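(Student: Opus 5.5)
Read literally, (ii) cannot hold for an arbitrary $\mathcal{G}$. If $G'\le G$ then $A\preceq G'$ implies $A\preceq G$, so $\mathcal{F}(G')\subseteq\mathcal{F}(G)$, and nothing forces $G'$ itself to lie in $\mathcal{G}$. I would therefore prove the proposition in the form in which it is used: the generating family of an MLCIF $\mathcal{F}(\mathcal{G})$ can be normalized, without changing $\mathcal{F}(\mathcal{G})$, so that (i)–(iii) hold. The normalization has three steps. First, shorten generators that violate (i). Second, close each layer $\mathcal{G}\cap\binom{[n]}{r}$ downward under $\le$ for (ii). Third, for (iii), discard every generator $H$ for which some other $G\in\mathcal{G}$ satisfies $H\preceq G$, since then $\mathcal{F}(H)\subseteq\mathcal{F}(G)$. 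The second step clearly leaves $\mathcal{F}(\mathcal{G})$ unchanged because $\mathcal{F}(G')\subseteq\mathcal{F}(G)$. The third step leaves $\mathcal{F}(\mathcal{G})$ unchanged by the same inclusion, and the resulting antichain is what (iii) counts. It should be run last, to obtain the antichain used in the injectivity argument below.

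For (i), let $G=\{g_1,\dots,g_r\}\in\mathcal{G}$ and suppose $g_r\ge k+r$. I would show that $\mathcal{F}(\mathcal{G}\cup\{\pi_{r-1}(G)\})$ is still intersecting. Since $\mathcal{F}(G)\subseteq\mathcal{F}(\pi_{r-1}(G))$ and the enlarged family is left-compressed, maximality then forces $\mathcal{F}(\pi_{r-1}(G))\subseteq\mathcal{F}(\mathcal{G})$. We may therefore replace $G$ by $\pi_{r-1}(G)$, and we iterate until (i) holds.

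By Theorem~\ref{thm1} it suffices to check that $\pi_{r-1}(G)\sim_{si}H$ for every $H\in\mathcal{G}$, including $H=G$ and $H=\pi_{r-1}(G)$. Take $H=\{h_1,\dots,h_s\}$ with $s\le k$. Lemma~\ref{main_lem}(iv) gives indices $p,q$ with $g_p=h_q=p+q-1\le r+s-1\le r+k-1<g_r$. Hence $p<r$, so the same witness $p$ works for $\pi_{r-1}(G)$. When $H=G$, the witnesses $p,q$ are both less than $r$, which also handles $\pi_{r-1}(G)\sim_{si}\pi_{r-1}(G)$. The edge case $r=1$ cannot occur, because $\{g_1\}$ self-strongly intersecting forces $g_1=1\le k$.

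For (ii), we replace each layer by its down-closure. Every added set $G'\le G$ satisfies $\max G'\le\max G$, so it still satisfies (i).

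For (iii), after the pruning above, $\mathcal{G}$ is an antichain with respect to $\preceq$, and by (i) each $G$ with $|G|=r$ lies in $[k+r-1]$. I would use the map
\[ G\mapsto \widehat G=G\cup[k+|G|,\,2k-1], \]
which sends $G$ to a $k$-subset of $[2k-1]$; the union is disjoint by (i). I expect the main obstacle to be showing that this map is injective. Suppose $\widehat G=\widehat H$ with $|G|=r\le s=|H|$. Comparing the two sets inside $[k+r-1]$ gives $H\cap[k+r-1]=G$, and the remaining elements of $H$ are exactly $[k+r,k+s-1]$. So $H\supseteq G$, hence $H\preceq G$, and the antichain property forces $H=G$. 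Injectivity gives $|\mathcal{G}|\le\binom{2k-1}{k}=\binom{2k-1}{k-1}$.
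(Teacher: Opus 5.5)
You are right that the statement needs reinterpreting, and (i) and (iii) also fail literally: $\{\{1\},\{1,n\}\}$ generates the star, yet $\{1,n\}$ violates (i). The paper's proofs of (i) and (ii) end by appealing to maximality (``contradicts the fact that $\mathcal{F}(\mathcal{G})$ is maximal'', ``since $\mathcal{F}(\mathcal{G})$ is maximal, $H\in\mathcal{G}$''). Those appeals only show that the modified generating family generates the same MLCIF. Your step one is the paper's witness argument with the correct conclusion, and your step two is fine.

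\textbf{The gap.} The gap is in combining the steps: your final family does not satisfy (ii). Step three, run last, deletes every $H$ with $H\preceq G$ for another $G\in\mathcal{G}$. Within a layer, $\preceq$ is just $\le$, so step three undoes step two. A layer that is both $\le$-down-closed and a $\preceq$-antichain is $\emptyset$ or $\{[r]\}$.

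This cannot be fixed by reordering the steps. Take $k=2$, $n=4$. Every generating family of the triangle $\{\{1,2\},\{1,3\},\{2,3\}\}$ contains $\{2,3\}$, so (ii) forces $\{1,2\}\preceq\{2,3\}$ into it too; no such family is an antichain.

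Your injection, however, genuinely needs the antichain. The family $\{\{1\},\{1,k+1\}\}$ generates the star and satisfies (i). After steps one and two it still contains both $\{1\}$ and $\{1,k+1\}$, and $\widehat{\{1\}}=\widehat{\{1,k+1\}}=\{1\}\cup[k+1,2k-1]$. So as written you prove (i) and (ii) for one normalization and (i) and (iii) for another, not the simultaneous statement you announce.

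\textbf{The fix.} To get all three at once, skip step three. Bound the family from steps one and two by the paper's map $G\mapsto\overline{G}=[k+|G|]\setminus G$.

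Injectivity of this map uses only (i). If $|G|=r<s=|H|$ and $\overline G=\overline H$, then $k+s\notin\overline H$ forces $k+s\in H$, contradicting $H\subseteq[k+s-1]$.

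Steps one and two leave $\mathcal{F}(\mathcal{G})$ unchanged, so Theorem~\ref{thm1} keeps $\mathcal{G}$ strongly intersecting. Hence distinct $G,H$ share some $a\le 2k-1$. Then $\overline G\cup\overline H\subseteq[2k]\setminus\{a\}$, which forces $\overline G\cap\overline H\neq\emptyset$. An intersecting family of $k$-subsets of $[2k]$ has at most $\binom{2k-1}{k-1}$ members.

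Your map into $k$-subsets of $[2k-1]$ is still a valid, EKR-free alternative, but only for the pruned antichain.
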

	\begin{proof} From Theorem \ref{thm1} we have $\mathcal{G}$ is a strongly intersecting family.
		
		(i) Suppose, to the contrary, that there exists $G \in \mathcal{G}$ such that $|G \cap [k+|G|-1]|<|G|$. Suppose that $G=\{g_1,\ldots,g_r\}$. We observe that not every element of $G$ is less than $k+r-1$. So, we have $g_{r} \geq k+r$. For all $H \in \mathcal{G}$, we have $G \sim_{si} H$. From Lemma \ref{main_lem}, there exists $1 \le p \le r$ and $1 \le q \le |H|$ such that $g_p=h_q=p+q-1$. If $p=r$ then $p+q-1=g_p=g_r \ge k+r=k+p$. It implies that $k \le q-1$. This is a contradiction. So, we must have $p<r$. From this, we obtain $g_p\le g_{r-1}$. Thus, $g_p \in \{g_1,\ldots,g_{r-1}\}=G'$. Hence, we also obtain $G' \sim_{si}H$. Thus, the family $\mathcal{G'}=(\mathcal{G}\setminus {G})\cup \{G'\}$ is strongly intersecting. This implies $\mathcal{F}(\mathcal{G}')$ is LCIF. On the other hand, from $G' \subsetneq G$ we get $G \preceq G'$ and therefore $\mathcal{F}(G) \subsetneq \mathcal{F}(G')$. This result contradicts the fact that $\mathcal{F}(\mathcal{G})$ is maximal. Thus, $g_{r} \leq k+r-1$ or $|G \cap [k+|G|-1]|=|G|$.
		
		(ii) Take any $G \in \mathcal{G} \cap \binom{[n]}{r}$ and any $H \le G$. We show that $H \in \mathcal{G}$. For all $K \in \mathcal{G}$, we have $G \sim_{si}K$. Hence, there exists $1 \le \ell \le n$ such that $\mu_{G}(\ell)+\mu_{K}(\ell) > \ell$.  Since $H \le G$, the Proposition \ref{tc} gives that $\mu_{H}(\ell) \ge \mu_{G}(\ell)$. This implies that $\mu_{H}(\ell)+\mu_{K}(\ell) \ge \mu_{G}(\ell)+\mu_{K}(\ell) > \ell$. Thus, $H \sim_{si} K$ for all $K \in \mathcal{G}$. Since $\mathcal{F}(\mathcal{G})$ is maximal, we have $H \in \mathcal{G}$. Since $H \le G$, we have $|H|=|G|=r$. So, $H \in \mathcal{G} \cap \binom{[n]}{r}$.
		
		(iii) Consider $G \in \mathcal{G}$ with $|G|=r$. From (i), we have $g_{r} \leq k+r-1 \leq 2k-1$.  We consider the set $\overline{G}=[k+r] \setminus G$. We have $|\overline{G}|=k$ and $\overline{G}\cap G =\emptyset$. We take $T=G \cup T'$, where $|T'|=k-r$ and $T' \subseteq [k+r+1,n]$ (we can choose such a set $T'$ because $n-k-r \geq k-r$. We have $T \preceq G$, so $T \in \mathcal{A}$. But, it is easy that $\overline{G} \cap T = \emptyset$. Therefore, $\overline{G}\notin \mathcal{A}$.
		
		We first show that the map \(G\mapsto \overline{G}\) is injective.
		Suppose that \(\overline{G}=\overline{H}\) that is $[k+r]\setminus G=[k+s]\setminus H$.  Let \(|G|=r\) and
		\(|H|=s\). From (i), we have $g_r \le k+r-1$ and $h_s \le k+s-1$. We assume that \(r < s\).  We take an element $k+i$ with $r+1 \le i \le s$. Since $k+i \notin [k+r]\setminus G$, we have $k+i \notin [k+s]\setminus H$. This means that $k+i \in H$. In particular, \(k+s\in H\), which contradicts the fact that \(H\subseteq [k+s-1]\). Thus we must have $r=s$. Then, $[k+r]\setminus G=[k+r]\setminus H$ and so $G=H$.
		
		Next, we show that the family
		\[
		\overline{\mathcal{G}}=\{\overline{G}:G\in\mathcal{G}\}
		\]
		is intersecting. Take distinct \(G,H\in\mathcal{G}\). Since
		\(\mathcal{G}\) is strongly intersecting, we have \(G\sim_{si}H\), and
		hence \(G\cap H\neq\emptyset\). Choose \(a\in G\cap H\). Then
		\(a\notin \overline{G}\cup \overline{H}\). Since $
		\overline{G},\overline{H}\subseteq [2k]$. It follows that $|\overline{G}\cup \overline{H}|<2k$. As \(|\overline{G}|=|\overline{H}|=k\), we conclude that $\overline{G}\cap \overline{H}\neq\emptyset$.
		Thus \(\overline{\mathcal{G}}\) is a \(k\)-uniform intersecting family on
		\([2k]\). By the Erdős--Ko--Rado theorem and the injectivity of the map
		\(G\mapsto\overline{G}\), we obtain
		\[
		|\mathcal{G}|\le|\overline{\mathcal{G}}|
		\le \binom{2k-1}{k-1}.
		\]
		
	\end{proof}
	
	Next, we need several lemmas concerning the sizes of
	\(\mathcal{L}(G)\) and \(\mathcal{F}(G)\). First, we recall a general
	formula for the cardinality of the left-generated family
	\(\mathcal{L}(G)\).
	
	\begin{lem}\label{sizel}
		Let \(G=\{g_1,g_2,\ldots,g_r\}\subseteq [n]\). Then
		\[
		|\mathcal{L}(G)|
		=
		\det\left(\binom{g_j-j+1}{i-j+1}\right)_{1\le i,j\le r},
		\]
		where, by convention, \(\binom{x}{m}=0\) if \(m<0\) or \(m>x\).
	\end{lem}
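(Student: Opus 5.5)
The plan is to observe that the matrix in Lemma~\ref{sizel} is lower Hessenberg and to reduce the determinant to a linear recurrence for the numbers $N_i=|\mathcal{L}(\{g_1,\ldots,g_i\})|$, then prove that recurrence combinatorially by a sign-reversing involution.

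\textbf{The matrix.} Write $M_r$ for the matrix in the statement and $m_{ij}=\binom{g_j-j+1}{i-j+1}$. If $j>i+1$, then $i-j+1<0$, so $m_{ij}=0$. On the superdiagonal $j=i+1$ we have $m_{i,i+1}=\binom{g_{i+1}-i}{0}=1$. Here $g_{i+1}-i\ge 1$ because $g_{i+1}\ge i+1$, so the convention does not kill this entry.

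\textbf{Step 1: reduce the determinant to a recurrence.} Set $D_0=1$ and $D_i=\det M_i$, where $M_i$ is the leading $i\times i$ submatrix. This submatrix involves only $g_1,\ldots,g_i$. Expanding $\det M_i$ along its last row and using the Hessenberg shape, I expect the standard identity
\[
D_i=\sum_{j=1}^{i}(-1)^{i-j}\binom{g_j-j+1}{i-j+1}D_{j-1}.
\]
In the expansion, deleting row $i$ and column $j$ leaves a block lower-triangular matrix. Its blocks are $M_{j-1}$ and an upper unitriangular block of superdiagonal $1$'s, and the $1$'s produce the sign $(-1)^{i-j}$.

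\textbf{Step 2: prove the same recurrence for the counts.} It then suffices to show that the $N_i$ satisfy the same recurrence with $N_0=1$. The combinatorial meaning of the coefficient is that $\binom{g_j-j+1}{i-j+1}$ counts the $(i-j+1)$-subsets $T$ of the interval $[j,g_j]$. Rewrite the recurrence as
\[
\sum_{j=1}^{i+1}(-1)^{i-j}\binom{g_j-j+1}{i-j+1}N_{j-1}=0,
\]
where the term $j=i+1$ equals $-N_i$ because its binomial is $\binom{\cdot}{0}=1$. Consider the signed set of pairs $(S',T)$ with
\[
S'\in\mathcal{L}(\{g_1,\ldots,g_{j-1}\}),\qquad T\subseteq[j,g_j],\qquad |T|=i-j+1,
\]
each pair carrying the sign $(-1)^{i-j}$.

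\textbf{Step 3: the involution (the main obstacle).} The idea is to move one element across the boundary between $S'$ and $T$. Compare $\max S'$ with $\min T$, and transfer the smaller of $\max S'$ and $\min T$ between $S'$ and $T$. Moving it into $S'$ raises $j$ by one; moving it into $T$ lowers $j$ by one. In either case the sign flips. Three things must be checked:
\begin{enumerate}[(a)]
\item the new $S'$ still satisfies the coordinatewise bounds $\le g_1,\ldots,g_{j\pm1}$;
\item the new $T$ lies in the shifted interval $[j\pm1,g_{j\pm1}]$;
\item the only fixed points are the pairs with $T=\emptyset$ and $S'\in\mathcal{L}(\{g_1,\ldots,g_i\})$, and these are then cancelled against the $j=i+1$ term. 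If instead no pair is fixed, the total signed sum vanishes outright.
\end{enumerate}
Verifying (a) and (b) uses that $T$ starts at or after position $j$ and that $g_j<g_{j+1}$.

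The precise choice of which element moves, and the exact lower endpoint $j$ of the interval, are where I expect the work to lie. If the involution proves messy, the fallback is Lindström--Gessel--Viennot: encode $S$ as $r$ nonintersecting lattice paths, with the $j$-th path from $(j,j)$ ending at height at most $g_j$. The determinant of single-path counts $\binom{g_j-j+1}{i-j+1}$ then gives the formula directly.
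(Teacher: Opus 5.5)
\textbf{Steps 1--2 are fine; Step 3 has a genuine gap.} The matrix is lower Hessenberg with unit superdiagonal. Expanding along the last row gives $D_i=\sum_{j=1}^{i}(-1)^{i-j}\binom{g_j-j+1}{i-j+1}D_{j-1}$. (The leftover block is lower, not upper, unitriangular, and the sign is just the cofactor sign $(-1)^{i+j}$.) The problem is that Step 3 carries the entire combinatorial content, and the rule you propose fails.

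Take $G=\{1,3\}$ and $i=2$. The signed set is $(\{1\},\{2\})$ and $(\{1\},\{3\})$ with $j=2$, plus $(\{1,2\},\varnothing)$ and $(\{1,3\},\varnothing)$ with $j=3$; the $j=1$ term is $\binom{1}{2}=0$.
\begin{itemize}
\item On $(\{1\},\{3\})$ your rule moves the smaller element $1=\max S'$ into $T$. This gives $T=\{1,3\}$, which would have to lie in $[1,g_1]=\{1\}$.
\item The only possible move on $(\{1,3\},\varnothing)$ sends it to $(\{1\},\{3\})$, which your rule does not send back. So the map is not an involution.
\end{itemize}
The underlying issue is that moving an element from $S'$ to $T$ shrinks the window from $[j,g_j]$ to $[j-1,g_{j-1}]$, while the large elements of $T$ stay where they are. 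Coincidences such as $\max S'=\min T$ (e.g.\ $(\{2\},\{2\})$ for $G=\{2,3\}$) are also not handled. Item (c) is muddled as well: once the $j=i+1$ term is in the signed set, the involution must have no fixed points at all.

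\textbf{The fix.} Move the \emph{largest} element of $T$, and shift the rest of $T$ by one whenever $j$ changes, because the window $[j,g_j]$ moves with $j$.
\begin{itemize}
\item If $T\neq\varnothing$ and either $j=1$ or $\max T>\max S'$: append $t=\max T$ to $S'$ as its $j$-th element (legal since $t\le g_j$), and replace $T$ by $\{x+1:x\in T\setminus\{t\}\}\subseteq[j+1,g_j]\subseteq[j+1,g_{j+1}]$.
\item Otherwise: remove $s=\max S'$, and replace $T$ by $\{x-1:x\in T\}\cup\{s\}$. This lies in $[j-1,g_{j-1}]$ because $\{x-1:x\in T\}\subseteq[j-1,s-1]$, and $s$ is genuinely new.
\end{itemize}
The two moves are mutually inverse, change $j$ by $\pm1$, and have no fixed points, which proves your recurrence. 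In the coordinates $x_m=s_m-m$, $T$ becomes a subset of $[0,g_j-j]$, whose bound is nondecreasing in $j$; that is why the shift is needed.

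\textbf{Comparison with the paper.} The paper uses exactly this change of variables but proves no recurrence. It identifies $\mathcal{L}(G)$ with bounded nondecreasing sequences $0\le x_1\le\cdots\le x_r$, $x_i\le g_i-i$, and cites the standard determinantal formula (LGV). Your fallback sentence does not yet amount to that argument. LGV needs fixed sources and sinks, and ``ending at height at most $g_j$'' is not a fixed endpoint. You would have to set up the paths so that the single-path counts really are $\binom{g_j-j+1}{i-j+1}$.
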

	
	\begin{proof}
		Put \(b_i=g_i-i\) for \(1\le i\le r\). Since
		\(g_1<g_2<\cdots<g_r\), we have
		\[
		0\le b_1\le b_2\le\cdots\le b_r.
		\]
		For any \(S=\{s_1,s_2,\ldots,s_r\}\in\mathcal{L}(G)\), define
		\[
		x_i=s_i-i \qquad (1\le i\le r).
		\]
		Then the condition \(S\le G\) is equivalent to
		\[
		0\le x_1\le x_2\le\cdots\le x_r,\qquad x_i\le b_i
		\quad (1\le i\le r).
		\]
		Conversely, every sequence satisfying these inequalities gives an
		element
		\[
		S=\{x_1+1,x_2+2,\ldots,x_r+r\}\in\mathcal{L}(G).
		\]
		Thus \(|\mathcal{L}(G)|\) is equal to the number of such bounded
		nondecreasing sequences. By the standard determinantal formula for
		bounded nondecreasing sequences, equivalently by the LGV lemma
		\cite{GV}, this number is
		\[
		\det\left(\binom{b_j+1}{i-j+1}\right)_{1\le i,j\le r}.
		\]
		Since \(b_j+1=g_j-j+1\), the desired formula follows.
	\end{proof}
	
	\textbf{Remark} In special cases where \(G\) has a simple structure, one can derive
	explicit formulas for \(|\mathcal{L}(G)|\) by direct counting.
	
	\begin{itemize}
		\item If \(1\le a<b\), then
		\[
		|\mathcal{L}([a,b])|=\binom{b}{a-1}.
		\]
		
		\item If \(1\le a<b\) and \(c\ge 1\), then
		\[
		\left|\mathcal{L}\left(\{a\}\cup [b,b+c]\right)\right|
		=
		\binom{b+c}{c+2}
		-
		\binom{b+c-a}{c+2}.
		\]
		
		\item For \(C_n=\{2,4,\ldots,2n\}\), we have
		\[
		|\mathcal{L}(C_n)|
		=
		\frac{1}{n+2}\binom{2n+2}{n+1},
		\]
		which is the \((n+1)\)-st Catalan number.
		
		\item If \(1\le a<b<c<d\le n\), then
		\[
		|\mathcal{L}([a,b]\cup[c,d])|
		=
		\sum_{m=0}^{a-1}
		\binom{b-a+m}{b-a}
		\binom{d-b+a-1-m}{d-c+1}.
		\]
		
		\item If \(1\le a<b<c\) and \(d\ge 0\), then
		\[
		\begin{aligned}
			&\left|\mathcal{L}\left([a,b]\cup \{c,c+2,\ldots,c+2d\}\right)\right| \\
			&\qquad =
			\sum_{m=0}^{a-1}
			\binom{b-a+m}{b-a}
			\left[
			\binom{c-b+a+2d-m}{d+1}
			-
			\binom{c-b+a+2d-m}{d}
			\right].
		\end{aligned}
		\]
	\end{itemize}
	
	Next, we find a formula for $|\mathcal{F}(G)|$ for
	$G=\{g_1,\ldots,g_r\}\in 2^{[n]}$ with $|G|=r$.
	\begin{lem}\label{sizef}
		Let $G \in 2^{[n]}$ with $|G|\le k$. Then
		\[
		|\mathcal{F}(G)|
		= \sum_{A \in \mathcal{L}(G)}
		\binom{n-\max A}{k-|G|}.
		\]
		
	\end{lem}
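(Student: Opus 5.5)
We prove Lemma~\ref{sizef} with a bijective decomposition of $\mathcal{F}(G)$ according to the first $|G|$ elements of each member. Put $r=|G|\le k$. For $A\in\binom{[n]}{k}$, the definition of $\preceq$ says that $A\preceq G$ holds exactly when $\pi_r(A)\le G$, that is, when $\pi_r(A)\in\mathcal{L}(G)$. So the map $A\mapsto \pi_r(A)$ sends $\mathcal{F}(G)$ into $\mathcal{L}(G)$, and we get the partition
\[
\mathcal{F}(G)=\bigsqcup_{S\in\mathcal{L}(G)}\bigl\{A\in\mathcal{F}(G):\pi_r(A)=S\bigr\}.
\]
It remains to count each fibre.

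Fix $S\in\mathcal{L}(G)$. A $k$-set $A$ satisfies $\pi_r(A)=S$ if and only if $A=S\cup T$, where $T$ is a $(k-r)$-subset of $[\max S+1,n]$. Indeed, the first $r$ elements of $A$ are those of $S$ exactly when every remaining element of $A$ exceeds $\max S$. Conversely, any such $A$ satisfies $\pi_r(A)=S\le G$, so $A\in\mathcal{F}(G)$. The map $T\mapsto S\cup T$ is injective, so the fibre over $S$ has exactly $\binom{n-\max S}{k-r}$ elements. Summing over $S\in\mathcal{L}(G)$ gives the formula.

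Two degenerate cases need a remark, though neither is a real obstacle. If $r=k$, each fibre is $\{S\}$ and $\binom{n-\max S}{0}=1$, which recovers $|\mathcal{F}(G)|=|\mathcal{L}(G)|$. If $\max S$ is large, so that $n-\max S<k-r$, the convention $\binom{x}{m}=0$ for $m>x$ correctly gives an empty fibre. The case $G=\varnothing$ is excluded, since we deal with nonempty generating sets. The one point that needs care is stating cleanly that $\pi_r(A)$ is determined by $A$ and that the fibres are disjoint; both are immediate.
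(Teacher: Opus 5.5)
Your proof is correct and takes essentially the same route as the paper: partition $\mathcal{F}(G)$ by the prefix $\pi_r(A)\in\mathcal{L}(G)$, then count each fibre as the $(k-r)$-subsets of $[\max S+1,n]$, which gives $\binom{n-\max S}{k-r}$. Your version is somewhat more careful than the paper's, since you justify explicitly that $A\preceq G$ is equivalent to $\pi_r(A)\in\mathcal{L}(G)$ and that each fibre is exactly the set of all $S\cup T$ with $T\subseteq[\max S+1,n]$ of size $k-r$.
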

	
	\begin{proof} We partition $\mathcal{F}(G)$ as follows. Two sets $S,T \in \mathcal{F}(G)$ belong to the same class if $\pi_r(S)=\pi_r(T)$. Then $\mathcal{F}(G)$ is partitioned into disjoint classes.
		Every $S \in \mathcal{F}(G)$ is constructed as follows. First, take $A \in \mathcal{L}(G)$ ( there are $|\mathcal{L}(G)|$ choices). Next, take $B \in \binom{[\max A+1,n]}{k-|G|}$ ( in $\binom{[n-\max A]}{k-|G|}$ ways). Then, $S=A \cup B \in \mathcal{F}(G)$. We obtain the desired result.
	\end{proof}
	
	For each $G \in 2^{[n]}$ and $\max G \le m \le n$, we define $\mathcal{R}(G,m)$ to be the collection of all sets $S \subseteq [m]$ with the same cardinality
	as \(G\) such that $G \le S$. We also define \(\mathcal{R}_1(G,m)\) to be the collection of all sets
	\(S\in\mathcal{R}(G,m)\) that contain \(1\).
	
	\begin{lem} \label{sizer} Let $G=\{g_1,\ldots,g_r\} \in 2^{[n]}$ and $\max G \le m \le n$. Then,
		$|\mathcal{R}(G,m)|=|\mathcal{L}(m+1-G)|$. Moreover, if $g_1=1$ then $|\mathcal{R}_1(G,m)|=|\mathcal{L}(\pi_{r-1}(m+1-G))|$
	\end{lem}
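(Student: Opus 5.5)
The plan is to exhibit an explicit order-reversing bijection. Define $\phi(S)=m+1-S$ for $S\subseteq[m]$; note that $m+1-S$ is defined because $\max S\le m<m+1$, and its elements lie in $[m]$. We show that $\phi$ maps $\mathcal{R}(G,m)$ bijectively onto $\mathcal{L}(m+1-G)$, i.e. onto the $r$-sets below $m+1-G$ in the order $\le$.

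The key observation is how $\phi$ interacts with $\le$. Write $S=\{s_1<\cdots<s_r\}$. Then the elements of $m+1-S$ in increasing order are $m+1-s_r<\cdots<m+1-s_1$, so the $i$-th smallest element of $m+1-S$ is $m+1-s_{r+1-i}$. Now take $S,T\in\binom{[m]}{r}$:
\[
S\le T \iff s_j\le t_j \ (\forall j)\iff m+1-t_{r+1-i}\le m+1-s_{r+1-i}\ (\forall i)\iff m+1-T\le m+1-S .
\]
Hence $G\le S$ if and only if $m+1-S\le m+1-G$. Since $\phi$ is an involution on $\binom{[m]}{r}$ and every element of $\mathcal{L}(m+1-G)$ lies in $[m]$ (being below a subset of $[m]$), $\phi$ restricts to a bijection $\mathcal{R}(G,m)\to\mathcal{L}(m+1-G)$. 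This gives the first claim.

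For the second claim, assume $g_1=1$. A set $S\in\mathcal{R}(G,m)$ contains $1$ iff $s_1=1$, iff $m\in m+1-S$, iff the largest element of $\phi(S)$ equals $m$. Note also that $\max(m+1-G)=m+1-g_1=m$. So $\mathcal{R}_1(G,m)$ corresponds to the sets $T=\{t_1<\cdots<t_{r-1}<m\}$ with $T\le m+1-G$. Write $H=m+1-G=\{h_1<\cdots<h_r\}$ with $h_r=m$. The constraint $t_r\le h_r$ is then automatic, so the condition reduces to $\{t_1,\ldots,t_{r-1}\}\le\pi_{r-1}(H)$. We still need $t_{r-1}<m$, which holds because $t_{r-1}\le h_{r-1}<h_r=m$. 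Conversely, any $T'\le\pi_{r-1}(H)$ extends uniquely to $T'\cup\{m\}$. Hence
\[
|\mathcal{R}_1(G,m)|=|\mathcal{L}(\pi_{r-1}(m+1-G))| .
\]
Here $\mathcal{L}$ is taken with respect to $(r-1)$-sets, and the case $r=1$ is trivial with the convention $\mathcal{L}(\emptyset)=\{\emptyset\}$.

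The only real obstacle is the index reversal in the order-reversing step, together with checking that $\mathcal{L}$ yields no sets outside $[m]$; both are handled above.
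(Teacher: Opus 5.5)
Your proof is correct and uses the same reflection bijection $S\mapsto m+1-S$ as the paper. You are more careful than the paper's proof in three places: you check the order reversal explicitly, you verify that every member of $\mathcal{L}(m+1-G)$ lies in $[m]$, and for the second claim you note that the forced top element $m$ must be deleted, where the paper only says the same map ``restricts'' to a bijection onto $\mathcal{L}(\pi_{r-1}(m+1-G))$.
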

	\begin{proof} We establish a bijection between $\mathcal{R}(G,m)$ and $\mathcal{L}(m+1-G)$ as follows. For each $S=\{s_1,\ldots,s_r\} \in \mathcal{R}(G)$, we set $T=m+1-S$. Suppose $T=\{t_1,\ldots,t_r\}$. We have $t_i=m+1-s_{r+1-i}$. Since $S \ge G$, $t_i \le m+1-g_{r+1-i}$. So $T \in \mathcal{L}(m+1-G)$. It is easy to see that this assignment establishes a bijection between $\mathcal{R}(G,m)$ and $\mathcal{L}(m+1-G)$. From this, we obtain the desired result.
		
		If \(g_1=1\), the same map restricts to a bijection between
		\(\mathcal{R}_1(G,m)\) and $\mathcal{L}\bigl(\pi_{r-1}(m+1-G)\bigr)$.
	\end{proof}
	
	Let $G=\{g_1,\ldots,g_r\}$ be a generator of size $r$, recall that $\mathcal{H}(G)=\{G_i: 1\le i\le r\}$, where $G_i=\{1\}\cup [i+1,g_i]$ and $\overline{G}=[k+r] \setminus G=\{\overline{g}_1,\ldots,\overline{g}_k\}$. We denote $\ell_i=g_i-i+1=|G_i|$ and we see that $1\le \ell_1 \le \ell_2\le \ldots,\ell_r\le k$. It is obvious that $A \in \mathcal{F}(\mathcal{H}(G))$ if $a_{\ell_i} \le g_i$ for some $1 \le i \le r$ and $A \notin \mathcal{F}(\mathcal{H}(G))$ if $a_{\ell_i} \ge g_i+1$ for all $1 \le i \le r$. The following lemma gives a necessary and sufficient condition for a \(k\)-set containing \(1\) to lie in $\mathcal{F}(\mathcal{H}(G))$.
	
	\begin{lem} \label{lm1} Let $A \in \binom{[n]}{k}$ with $1 \in A$. Then, $A \in \mathcal{F}(\mathcal{H}(G))$ if and only if $A \notin \mathcal{R}_1(\overline{G})$.
	\end{lem}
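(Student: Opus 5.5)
We read $\mathcal{R}_1(\overline{G})$ as $\mathcal{R}_1(\overline{G},n)$. Here $\overline{G}=[k+r]\setminus G$ is a $k$-set, and it contains $1$ because $1\notin G$. For a $k$-set $A$ with $1\in A$, membership in $\mathcal{R}_1(\overline{G})$ then just means $\overline{G}\le A$. By the observation preceding the lemma, $A\notin\mathcal{F}(\mathcal{H}(G))$ if and only if $a_{\ell_i}\ge g_i+1$ for all $1\le i\le r$. This is meaningful because $1\le \ell_i\le k$: we have $\ell_i=g_i-i+1\ge 1$, and $\ell_i\le k$ since $g_i\le k+i-1$ by the generator condition. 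So the lemma reduces to the claim
\[
\overline{G}\le A \iff a_{\ell_i}\ge g_i+1 \text{ for all } 1\le i\le r .
\]
The tool I will use is the counting function $\mu$, together with the identity $\mu_{\overline{G}}(x)=x-\mu_G(x)$ for $0\le x\le k+r$.

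For $(\Rightarrow)$, note that exactly $g_i-i$ elements of $\overline{G}$ are at most $g_i$, since $\mu_G(g_i)=i$. Hence the $(g_i-i+1)$-st element of $\overline{G}$ satisfies $\overline{g}_{\ell_i}\ge g_i+1$; it exists because $|\overline{G}|=k\ge\ell_i$. Therefore $\overline{G}\le A$ gives $a_{\ell_i}\ge\overline{g}_{\ell_i}\ge g_i+1$.

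For $(\Leftarrow)$, I argue by contraposition using the following reformulation: for two $k$-sets, $\overline{G}\le A$ holds if and only if $\mu_A(x)\le\mu_{\overline{G}}(x)$ for all $x$. The forward implication is Proposition~\ref{tc}(ii). The converse is the standard fact that $a_j<\overline{g}_j$ forces $\mu_A(a_j)\ge j>\mu_{\overline{G}}(a_j)$.

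So suppose $\mu_A(x)>\mu_{\overline{G}}(x)$ for some $x$. Then $x<k+r$, since $\mu_{\overline{G}}(x)=k$ for $x\ge k+r$. Thus $\mu_A(x)>x-\mu_G(x)$. The key step is a descent argument that moves $x$ down to an element of $G$.
\begin{itemize}
\item If $x\notin G$, then $\mu_{\overline{G}}(x-1)=\mu_{\overline{G}}(x)-1$ while $\mu_A(x-1)\ge\mu_A(x)-1$, so the violation persists at $x-1$.
\item No violation is possible at $x=0$, where both sides are $0$.
\end{itemize}
Hence we eventually reach some $x=g_i\in G$ with a violation. There $\mu_G(g_i)=i$, so $\mu_A(g_i)\ge g_i-i+1=\ell_i$, which means $a_{\ell_i}\le g_i$. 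This contradicts the hypothesis, so $\overline{G}\le A$.

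The main subtlety is this descent step: it ensures the witnessing index lands exactly at a position $\ell_i$ associated with some $G_i$. Everything else is bookkeeping with $\mu$ and the bounds $1\le\ell_i\le k$. The hypothesis $1\in A$ is used only to match the definition of $\mathcal{R}_1$, since $1\in\overline{G}$.
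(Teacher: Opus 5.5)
Your proof is correct. The direction $\overline{G}\le A\Rightarrow a_{\ell_i}\ge g_i+1$ for all $i$ is the paper's own argument: $|\overline{G}\cap[g_i]|=g_i-i$ forces $\overline{g}_{\ell_i}\ge g_i+1$. The paper reaches it slightly indirectly, first using left-compression to replace $A$ by $\overline{G}$.

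For the substantive direction you take a genuinely different route. The paper checks $a_i\ge\overline{g}_i$ coordinate by coordinate in four cases: $i<\ell_1$; $i=\ell_p$ with $p$ chosen maximal; $\ell_p<i<\ell_{p+1}$; and $i>\ell_r$. In each case it locates $\overline{g}_i$ explicitly, using that $\overline{G}$ fills the gaps between consecutive elements of $G$. It then pushes the bound $a_{\ell_p}\ge g_p+1$ forward using strict monotonicity of $A$.

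You instead recast $\overline{G}\le A$ as the pointwise domination $\mu_A\le\mu_{\overline{G}}$ and use the complement identity $\mu_{\overline{G}}(x)=x-\mu_G(x)$. You then slide any violation leftward across elements of $\overline{G}$ until it lands on some $g_i$, where it says exactly $a_{\ell_i}\le g_i$.

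Your descent is shorter. It handles repeated values among the $\ell_p$ and the two end ranges automatically, and it makes transparent why only the positions $\ell_i$ matter. The paper's case analysis is more elementary and gives explicit information about where the elements of $\overline{G}$ sit. That is the same kind of bookkeeping with $[g_i]\cap B$ that the paper reuses in the proof of Theorem~\ref{thm2}.

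One small correction to your closing remark. The hypothesis $1\in A$ is not used only to match the definition of $\mathcal{R}_1$. It is also what makes the observation you invoke valid. Every member of $\mathcal{F}(G_i)$ contains $1$, so $a_{\ell_i}\le g_i$ places $A$ in $\mathcal{F}(G_i)$ only when $a_1=1$.
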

	\begin{proof} We prove that $A \notin \mathcal{F}(\mathcal{H}(G))$ if and only if $A \in \mathcal{R}_1(\overline{G})$.
		
		Suppose that $A \notin \mathcal{F}(\mathcal{H}(G))$. Then, for all $1 \le i \le r$ we have $a_{\ell_i} \ge g_i+1$. We need to prove that $\overline{G}\le A$. Consider the sequence of indices $\ell_1\le \ell_2 \le \ldots \le \ell_r (*)$. For any $1 \le i \le k$, we will prove that $a_i \ge \overline{g}_i$. there are four cases:
		
		Case $1$. $1 \le i < \ell_1$. Note that $i<\ell_1=g_1$. So
		$[g_{1}-1] \subseteq \overline{G}$. This implies that $a_i \ge i = \overline{g}_i$.
		
		Case $2$. The index $i$ appears in the (*). Let $p$ be the largest index such that $\ell_p=i$. First, consider $p=r$. The set $[g_r]$ has exactly $r$ elements belonging to $G$, so has exactly $(g_r-r)$ elements belonging to $\overline{G}$. Since $g_r+1 \notin G$, the element $g_r+1 \in \overline{G}$. So $\overline{g}_{\ell_r} =g_r+1$. It follows that $a_i=a_{\ell_r} \ge g_r+1=\overline{g}_{\ell_r}=\overline{g}_i$. Next, we consider $p < r$. We have $a_i=a_{\ell_p} \ge g_p+1$. If $g_{p}+1=g_{p+1}$ then $\ell_{p+1}=g_{p+1}-(p+1)+1=g_{p}+1-p-1+1=\ell_p=i$. This contradicts the maximality of $p$. Thus, $g_p+1 \notin G$. So, $g_p+1 \in \overline{G}$. The set $[g_p]$ contains $p$ elements of $G$, namely $g_1,\ldots,g_p$. So, $[g_p]$ contains $g_{p}-p=\ell_{p}-1$ elements of $\overline{G}$. This means that the $\ell_p$-th element of $\overline{G}$ is $g_p+1$. Hence, $a_{i}=a_{\ell_{p}}\ge g_{p}+1= \overline {g}_{\ell_p}=\overline{g}_i$.
		
		Case $3$. There exists $p$ such that $\ell_p < i < \ell_{p+1}$. We have $a_{\ell_{p}} \ge g_{p}+1$. This implies that $a_i \ge i+p$. The set $[g_p]$ contains $p$ elements of $G$, Therefore, it contains \(g_p-p\) elements of $\overline{G}$. On the other hand, from  $\ell_p < i < \ell_{p+1}$, we obtain that $g_p+1 < i+p < g_{p+1}$. It is easy to see that $g_p+1 \in \overline{G}$ and so $[g_p+1,g_{p+1}-1] \subseteq \overline{G}$. In summary, we conclude that the set $[g_{p+1}]$ contains exactly $g_p-p$ elements from $\overline{G} \cap [g_p]$ and contains exactly $g_{p+1}-g_p-1$ elements from $\overline{G} \cap [g_p+1,g_{p+1}-1]$. Thus, there are exactly $(g_p-p)+(g_{p+1}-g_p-1)=g_{p+1}-p-1=\ell_{p+1}-1$ elements belonging to $\overline{G} \cap [g_{p+1}]$. Therefore, we have $\overline{g}_{\ell_{p+1}-1} \le g_{p+1}-1$, equivalently, $\overline{g}_{\ell_{p+1}-1} \le (\ell_{p+1}-1) +p$. Since $i <\ell_{p+1}$, we get $\overline{g}_i \le p+i \le a_i$. 
		
		Case $4$. $\ell_r < i \le k$. We have $[g_r+1,r+k] \subseteq \overline{G}$. Moreover, the set $[g_r]$ contains exactly $r$ elements of $G$ so $[g_r]$ contains exactly $g_r-r=\ell_r-1$ elements of $\overline{G}$. Therefore $\overline{g}_{\ell_r} = g_r+1$ and from this we obtain $\{\overline{g}_{\ell_r},\ldots,\overline{g}_k\}=[g_r+1,r+k]$. Thus, $\overline{g}_i=r+i$ for all $\ell_r <i \le k$. Now, from  $a_{\ell_r} \ge g_r+1=\ell_r+r$, we hvae $a_i \ge r+i=\overline{g}_i$ for all $\ell_r < i \le k$.
		
		Thus, in all cases, we have $\overline{g}_i \le a_i$. Hence, we conclude that $\overline{G} \le A$ that is $A \in \mathcal{R}_1(\overline{G})$.
		
		Conversely, take any $A \in \mathcal{R}_1(\overline{G})$. Then, we have $1 \in A$ and $\overline{G}\le A$. If $A \in \mathcal{F}(\mathcal{H}(G))$ then $\overline{G} \in \mathcal{F}(\mathcal{H}(G))$ by the left-compressed property. Therefore, there exists $1 \le i \le r$ such that $\overline{g}_{\ell_i} \le g_i$. On the other hand, the set $[g_i]$ contains $i$ elements of $G$, so it contains $g_i-i=\ell_i-1$ elements of $\overline{G}$. Thus,  $\overline{g}_{\ell_i} \ge g_i+1$ which is a contradiction. Hence, $A \notin \mathcal{F}(\mathcal{H}(G))$.
	\end{proof}
	
	Finally, we present a lemma concerning the comparison of binomial coefficients, whose proof is simple.
	
	\begin{lem} \label{comp} Suppose $0 \le a \le b \le n$ and $a+b \le n$. Then
		\[
		\binom{n}{a}\le\binom{n}{b}
		\]
	\end{lem}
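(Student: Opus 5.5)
We need to prove Lemma \ref{comp}: for $0\le a\le b\le n$ with $a+b\le n$, we have $\binom{n}{a}\le\binom{n}{b}$. The plan is to exploit the unimodality of the binomial coefficients $\binom{n}{j}$ in $j$, together with the symmetry $\binom{n}{j}=\binom{n}{n-j}$. The key observation is that the hypothesis $a+b\le n$ forces $a\le n-b$. So $a$ lies at least as far from the centre $n/2$ as $b$ does, which is exactly what one needs in order for $\binom{n}{a}$ to be the smaller value.

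First, I will record the ratio identity
\[
\frac{\binom{n}{j+1}}{\binom{n}{j}}=\frac{n-j}{j+1},
\]
valid for $0\le j<n$. This ratio is at least $1$ precisely when $2j+1\le n$. Hence $\binom{n}{j}$ is non-decreasing for $0\le j\le \lceil n/2\rceil$ and, by symmetry, non-increasing for $j\ge\lfloor n/2\rfloor$.

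Next I split into two cases according to the position of $b$.

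\emph{Case $b\le n/2$.} Then $a\le b\le n/2$, and the monotonicity on the increasing side gives $\binom{n}{a}\le\binom{n}{b}$ directly.

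\emph{Case $b>n/2$.} I use symmetry to write $\binom{n}{b}=\binom{n}{n-b}$, where now $n-b<n/2$. The hypothesis gives $a\le n-b$, so both $a$ and $n-b$ lie in the increasing range, and monotonicity yields $\binom{n}{a}\le\binom{n}{n-b}=\binom{n}{b}$.

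Some care is needed at the boundary of the increasing range when $n$ is odd, where the two middle coefficients coincide. This is harmless, because in each case the comparison takes place between indices that are at most $n/2$.

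There is no real obstacle here; the only point needing attention is verifying that the ratio bound covers the full range of indices used.
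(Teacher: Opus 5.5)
Your proof is correct, but it takes a different route from the paper's, and the difference matters. The paper does not use the symmetry $\binom{n}{j}=\binom{n}{n-j}$. Instead it tries to show directly that $\binom{n}{a}\le\binom{n}{a+1}\le\cdots\le\binom{n}{b}$, justifying $\frac{n-i}{i+1}\ge 1$ for $a\le i<b$ by the chain $i+1\le b\le n-a\le n-i$.

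The last inequality in that chain points the wrong way: $i\ge a$ gives $n-i\le n-a$. In fact the monotone chain can fail under the lemma's hypotheses. For $n=10$, $a=1$, $b=9$ one has $\binom{10}{5}=252>210=\binom{10}{6}$, even though the conclusion $\binom{10}{1}=10=\binom{10}{9}$ holds. For $a<b$ the direct chain is monotone precisely when $2b\le n+1$, which is essentially your first case.

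Your reflection $\binom{n}{b}=\binom{n}{n-b}$ when $b>n/2$, combined with $a\le n-b$, is exactly what is needed for the remaining range. Equivalently, your argument runs the paper's ratio chain on the pair $(a,\min\{b,n-b\})$ and then applies symmetry. So your case split is not a cosmetic variant: it is the step that makes the proof complete. The cost is modest, since you invoke unimodality and symmetry rather than a single one-line chain.
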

	
	\begin{proof}
		Since $a\le b$ and $a+b\le n$, we have $
		b\le n-a$.
		
		For every integer $i$ with $a\le i< b$, we have $i+1\le b\le n-a\le n-i$.
		Therefore,
		\[
		\frac{\binom{n}{i+1}}{\binom{n}{i}}
		=\frac{n-i}{i+1}\ge 1.
		\]
		It follows that
		\[
		\binom{n}{a}\le \binom{n}{a+1}\le \cdots \le \binom{n}{b}.
		\]
	\end{proof}
	
	\subsection{Proof of Theorem \protect\ref{thm2}} \mbox{}\par
	
	(i) For all $G_{i}=\{1\}\cup[i+1,g_i] \in \mathcal{H}(G)$, since $i+(g_{i}-i+1)-1=g_{i}$. By Lemma  \ref{main_lem}, we have $G \sim_{si}G_i$. It is obvious that $\mathcal{H}(G)$ is strongly intersecting because every set in \(\mathcal{H}(G)\) contains \(1\). Hence, the family $G \cup \mathcal{H}(G)$ is strongly intersecting. By Theorem \ref{thm1}, $\mathcal{F}_{G}=\mathcal{F}(G) \cup \mathcal{F}(\mathcal{H}(G))$ is LCIF.
	
	Next, we prove that $\mathcal{F}_{G}$ is maximal. Assume that $\mathcal{F}_{G}$ is not maximal. Then, there exists a LCIF $\mathcal{A}$ such that $\mathcal{F}_G \subsetneq \mathcal{A}$. We take $A \in \mathcal{A} \setminus \mathcal{F}_G$ and assume that $A=\{a_{1},\ldots,a_{k}\}$.  We consider the two cases for $a_{1}$. First, we consider $a_{1} \geq 2$. Because $A \notin \mathcal{F}_G$, we have $a_{p} \geq g_{p}+1$, for some $1\leq p \leq r$. We take $B=\{2,3,\ldots,p,g_{p}+1,\ldots,k+g_{p}-p+1\}=[2,p]\cup[g_{p}+1,k+g_{p}-p+1]$. It is clear that $|B|=k$ and $B \leq A$. Since $\mathcal{A}$ is left-compressed and $A \in \mathcal{A}$, we have $B \in \mathcal{A}$. On the other hand, the set $[k+g_p-p+2,n]$ has $(n-k-g_{p}+p-1)$ elements and $n-k-g_{p}+p-1 \geq k-g_{p}+p-1$. So,  we can choose $C \subseteq [k+g_p-p+2,n]$ such that $|C|=k-g_{p}+p-1$. Next, choose $D=\{1,p+1,\ldots,g_{p}\}\cup C$. We have $|D|=k$ and $D \in \mathcal{F}(G_p)$. So $D \in \mathcal{F}(\mathcal{H}(G))$. Now, $B,D \in \mathcal{A}$, but $B \cap D =\emptyset$. Therefore, there does not exist a set $A \in \mathcal{A} \setminus \mathcal{F}_G$ such that $a_{1} \geq 2$.
	
	We now turn to the case $a_{1}=1$. We have $A \notin \mathcal{F}(\mathcal{H}(G))$. From Lemma \ref{lm1}, we have $A \in \mathcal{R}_1(\overline{G})$. Thus, $ \overline{G} \le A$. Since $A \in \mathcal{A}$, we obtain $\overline{G} \in \mathcal{A}$. We choose a set $C= G \cup D$, where $D \in \binom{[k+r+1,n]}{k-r}$. We have $|C|=k$ and so $C \in \mathcal{F}(G)$. So $C \in \mathcal{A}$. However, this contradicts the fact that $C \cap \overline{G}=\emptyset$. 
	
	Since both cases regarding $a_{1}$ lead to a contradiction, it follows that the family $\mathcal{F}(G) \cup \mathcal{F}(\mathcal{H}(G))$ is maximal.
	
	We next estimate the size of the family $\mathcal{F}_{G}$. We recall that $ \mathcal{F}_0(G)=\{ A \in \mathcal{F}(G):1 \notin A\}$. Take any $A=\{a_1,\ldots,a_k\} \in \mathcal{F}(G)$ with $a_1=1$. We have $\pi_r(A) \le G$. We take $B=G\cup[n-k+r+1,n]$. We have $|B|=k$ and $B \in \mathcal{F}(G)$.   From Lemma \ref{main_lem}, there exist indices $1 \le p,q \le k$ such that $a_p=b_q=p+q-1$. It is obvious that $a_2 \le q+1$. If $q\ge r+1$ then $b_q=n-k+q$. Then $b_q=n-k+q=p+q-1$. Therefore $n=k+p-1 < 2k$, which is a contradiction. Thus, we must have $q \le r$. From this, we have $b_q=g_q$. Now, $a_{\ell_q}=a_{g_q-q+1}=a_p =b_q=g_q$. So $\pi_{\ell_q}(A) \le \{1\}\cup[q+1,g_q]=G_q$. Thus, $A \in \mathcal{F}(G_q)$. This implies $A \in \mathcal{F}(\mathcal{H}(G))$. Therefore, all sets in $\mathcal{F}(G)$ that contain \(1\) belong to \(\mathcal{F}(\mathcal{H}(\mathcal{G}))\). It follows that \(\mathcal{F}(G)\) is the disjoint union of the families $\mathcal{F}_0(G)$ and $\mathcal{F}(\mathcal{H}(G))$. This means that,
	$\mathcal{F}_{G}=\mathcal{F}_{0}(G) \cup \mathcal{F}(\mathcal{H}(G))$ with $\mathcal{F}_0(G) \cap \mathcal{F}(\mathcal{H}(G))= \emptyset$.
	
	From Lemma \ref{sizef} we obtain
	\[
	|\mathcal{F}_0(G)|=\sum_{A \in \mathcal{L}_0(G)} \binom{n-\max A}{k-r}
	\]
	where, $\mathcal{L}_0(G)=\{A \in \mathcal{L}(G): 1 \notin A\}$. From Lemma \ref{sizer} and Lemma \ref{lm1}, we have 
	\[
	|\mathcal{F}(\mathcal{H}(G))|=\binom{n-1}{k-1}-|\mathcal{R}_1(\overline{G})|=\binom{n-1}{k-1}-|\mathcal{L}(\pi_{k-1}(n+1-\overline{G}))|
	\]
	
	Hence,
	\[
	|\mathcal{F}_{G}|=|\mathcal{F}_0(G)|+|\mathcal{F}(\mathcal{H}(G))|=\sum_{A \in \mathcal{L}_0(G)} \binom{n-\max A}{k-r}+\binom{n-1}{k-1}-|\mathcal{L}(\pi_{k-1}(n+1-\overline{G}))|
	\]
	
	\textbf{Example}
	(a) $n=7,k=3,G=\{2,3,5\}, r=3$
	\[
	\mathcal{F}_0(G)=\{\{2,3,4\},\{2,3,5\}\},\quad\overline{G}=\{1,4,6\},\quad n+1-\overline{G}=\{2,4,7\}, \quad |\mathcal{L}(\pi_2(n+1-\overline{G})|=|\mathcal{L}(\{2,4\})|=5
	\]
	Hence, $|\mathcal{F}_G|=2+ \binom{6}{2}-5=12$.
	
	(b) $n=9,k=4, G=\{2,3\}, r=2$.
	\[
	|\mathcal{F}_0(G)|=\binom{6}{2}=15, \quad \overline{G}=\{1,4,5,6\},\quad n+1-\overline{G}=\{4,5,6,9\},\quad |\mathcal{L}(\pi_{3}(n+1-\overline{G})|=|\mathcal{L}(\{4,5,6\}|=20
	\]
	Hence, $|\mathcal{F}_G|=15+\binom{8}{3}-20=51$.
	
	(ii) We set $G'=G \setminus \{g_r\}=\{g_1,\ldots,g_{r-1}\}$. Since $G$ is a weak generator, $t(G)< |G|$. So $G'$ is also generator with $t(G')=t(G)$. As proved in (i), we have $\mathcal{F}_{G} =\mathcal{F}_0(G) \cup \mathcal{F}(\mathcal{H}(G))$ and $\mathcal{F}_{G'} =\mathcal{F}_0(G') \cup \mathcal{F}(\mathcal{H}(G'))$. Therefore, $|\mathcal{F}_{G}| =|\mathcal{F}_0(G)|+ |\mathcal{F}(\mathcal{H}(G))|$ and $|\mathcal{F}_{G'}| =|\mathcal{F}_0(G')|+|\mathcal{F}(\mathcal{H}(G'))|$. From this, we obtain 
	\[
	|\mathcal{F}_{G}|-|\mathcal{F}_{G'}|=|\mathcal{F}_{0}(G)|-|\mathcal{F}_0(G')|+|\mathcal{F}(\mathcal{H}(G))|-|\mathcal{F}(\mathcal{H}(G'))|
	\]
	It is obvious that $\mathcal{F}_0(G) \subsetneq \mathcal{F}_0(G')$ and $\mathcal{F}(\mathcal{H}(G))=\mathcal{F}(\mathcal{H}(G')) \cup \mathcal{F}(G_r)$, 
	
	Take any $A \in \mathcal{F}_0(G')\setminus \mathcal{F}_0(G)$. We have  $\pi_{r-1}(A) \le G'$ and $a_r \ge g_r+1$. It follows that $\pi_{r-1}(A) \in \mathcal{L}_0(G')$ and $\{a_r,\ldots,a_k\} \in \binom{g_r+1,n]}{k-r+1}$. Hence,
	\[
	|\mathcal{F}_0(G')\setminus \mathcal{F}_0(G)|=|\mathcal{L}_0(G')|\binom{n-g_r}{k-r+1}.
	\]

	Next, we count the number of elements in $\mathcal{F}(\mathcal{H}(G)) \setminus \mathcal{F}(\mathcal{H}(G'))$. Take any $A\in \mathcal L_0(G')$, and let $B=[g_r]\setminus A$. Since $|A|=r-1, a_{r-1} \le g_{r-1}<g_r$, we have $|B|=g_r-r+1$ and $b_{g_r-r+1}=g_r$. So, for all $1 \le i \le r-1$, we have $a_i \le g_i$. It is easy to see that $|[g_i] \cap A| \ge i$. Thus, $|[g_i]\cap B| \le g_i-i$. This implies that $b_{g_i-i+1} \ge g_i+1$. Therefore $B \notin \mathcal{F}(G_i)$ and $B \in \mathcal{F}(G_r)$. It follows that $B \in \mathcal{F}(\mathcal{H}(G)) \setminus \mathcal{F}(\mathcal{H}(G'))$. Conversely, for any $S \in \mathcal{F}(\mathcal{H}(G)) \setminus \mathcal{F}(\mathcal{H}(G'))$, we have $s_{g_i-i+1} \ge g_i+1$ for all $1 \le i \le r-1$ and $s_{g_r-r+1} \le g_r$. We define $A=[g_r]\setminus \{s_1,s_2,\ldots,s_{g_r-r+1}\}$. We have $|A|=r-1$. On the other hand, $|[g_i] \cap S|\le g_i-i$. So, $|[g_i]\cap A \ge i $. It follows that $a_i \le g_i$ for all $1 \le i \le r-1$. We conclude that $A \le G'$, equivalently $A \in  \mathcal{L}_0(G')$. Recall that, $\ell_i=g_i-i+1$ for each $i \in [r-1]$. Then, $\ell_1 \le \ell_2\le \ldots \le \ell_r$ and $s_{\ell_i} \ge g_i+1$ for all $1 \le i \le r-1$.
	Therefore, $g_{r-1}+1 \le s_{\ell_r-1} \le s_{\ell_r} \le g_r$. Since $\ell_r-\ell_{r-1}=g_r-r+1-g_{r-1}+(r-1)-1=g_r-g_{r-1}-1$, we obtain $s_{\ell_r}=g_r$. From this, we have $s_{\ell_r+1} \ge g_r+1$. Thus, $\{s_{\ell_r+1},\ldots,s_k\} \in \binom{[g_r+1,n]}{k-g_r+r-1}$. In summary, we proved that
	\[
	|\mathcal{F}(\mathcal{H}(G))\setminus \mathcal{F}(\mathcal{H}(G'))|=|\mathcal{L}_0(G')\binom{n-g_r}{k-g_r+r-1}
	\]
	
	Since $\mathcal{F}(\mathcal{H}(G')) \subseteq \mathcal{F}(\mathcal{H}(G))$ and $\mathcal{F}_0(G) \subseteq \mathcal{F}_0(G')$, we have 
	\[
	|\mathcal{F}_0(G') \setminus \mathcal{F}_0(G)|=|\mathcal{F}_0(G')|-|\mathcal{F}_0(G)|
	\]
	and 
	\[
	|\mathcal{F}(\mathcal{H}(G)) \setminus \mathcal{F}(\mathcal{H}(G'))|=|\mathcal{F}(\mathcal{H}(G))|-|\mathcal{F}(\mathcal{H}(G'))|
	\]
	Therefore 
	\[
	\begin{aligned}
		|\mathcal{F}_{G'}| - |\mathcal{F}_{G}|
		&= |\mathcal{F}_0(G')| + |\mathcal{F}(\mathcal{H}(G'))|
		- |\mathcal{F}_0(G)| - |\mathcal{F}(\mathcal{H}(G))| \\
		&= |\mathcal{L}_0(G')|\binom{n-g_r}{k-r+1}
		- |\mathcal{L}_0(G')|\binom{n-g_r}{k-g_r+r-1} \\
		&= |\mathcal{L}_0(G')|
		\left(
		\binom{n-g_r}{k-r+1}
		-
		\binom{n-g_r}{k-g_r+r-1}
		\right).
	\end{aligned}
	\]
	Now, note that $(k-r+1)+(k-g_r+r-1)=2k-g_r<n$. Moreover, $k-g_r+r-1 < n-g_r$ and since $g_r \le k+r-1$, we also have $n-g_r \ge n-k-r+1 \ge k-r+1$. Moreover, $(k-r+1)-(k-g_r+r-1)=g_r-2r+2$. So, the remaining part of the theorem follows directly from Lemma \ref{comp}.
	
	(iii) Suppose that $G$ is a strong generator with $t(G)=|G|=s$. If $s=2$ then $G=\{2,3\}$. We have $K$ is a generator and $K \le G$. So $K=G$ and the conclusion is immediate. We consider $s \ge 3$. Then, $G=\{g_1,\ldots,g_{s-2},2s-2,2s-1\}$, where $g_i\ge 2i$ for all $1 \le i \le s-2$. Since $K \le G$ is a generator, from (i) we have $\mathcal{F}_{K}=\mathcal{F}_0(K) \cup \mathcal{F}(\mathcal{H}(K))$.
	Assume $K=\{k_1,\ldots,k_s\} \in \mathcal{L}_0(G)$, We will partition the set $\mathcal{F}_0(G) \setminus \mathcal{F}_0(K)$ into disjoint classes as follows. Two sets $A, B \in \mathcal{F}_0(G) \setminus \mathcal{F}_0(K)$ belong to the same class if and only if $\pi_s(A)=\pi_s(B)$. For $A \in \mathcal{L}_0(G) \setminus \mathcal{L}_0(K)$, we define $\mathcal{F}_A$ to be the sets of all $S \in \mathcal{F}_0(G) \setminus \mathcal{F}_0(K))$ such that $\pi_s(S)=A$. Thus, we have 
	\[
	\mathcal{F}_0(G) \setminus \mathcal{F}_0(K)=\bigcup_{A \in \mathcal{L}_0(G)\setminus \mathcal{L}_0(K)}\mathcal{F}_{A}
	\]
	Moreover, it is easy to see that
	\[
	|\mathcal{F}_A|=\binom{n-\max A}{k-s}
	\]
	
	Consider $\overline{A}=[k+s]\setminus \pi_s(A)$. It is obvious that $|\overline{A}|=k$. Note that $\overline{a}_1=1$ and $a_{s}-s \le g_{s}-s \le k-1$. We define 
	
	\[
	\mathcal{G}_A =
	\left\{
	S : S = \pi_{a_s-s}(\overline{A}) \cup S'
	\text{ for some } S' \in \binom{[a_s+1,n]}{k-a_s+s}
	\right\}.
	\]
	it is obvious that 
	\[
	|\mathcal{G}_A|=\binom{n-a_s}{k-a_s+s}
	\]
	We have the following facts about $\mathcal{G}(A)$
	
	\textbf{Fact 1} $\mathcal{G}_A \subseteq \mathcal{F}(\mathcal{H}(K))\setminus \mathcal{F}(\mathcal{H}(G))$. 
	
	\textbf{Proof}.Take any $S \in \mathcal{G}_A$. Since $A \in \mathcal{L}_0(G)$ and $ A \notin \mathcal{L}_0(K)$, there exists $1 \le p \le s$ such that $k_p+1 \le a_p \le g_p$. The set $[k_p]$ contains at most $p-1$ elements of $A$. Hence, $[k_p]$ contains at least $k_p-p+1$ elements of $\overline{A}$. It implies that $\overline{a}_{k_p-p+1}\le k_p$. Now, we note that $k_p -p+1 \le a_p-p \le a_s-s$. So, $s_{k_p-p+1}=\overline{a}_{k_p-p+1} \le k_p$. It follows that $S \in \mathcal{F}(K_p)$ and therefore $S \in \mathcal{F}(\mathcal{H}(K))$.
	
	Next, we prove that $s_{g_i-i+1} \ge g_i+1$ for all $1 \le i \le s$. After proving this, then we immediately obtain that $S \notin \mathcal{F}(G_i)$ and so $S \notin \mathcal{F}(\mathcal{H}(G))$. For all $1 \le i \le s$, we have $a_i \le g_i$. Thus, the set $[g_i]$ contains at least $i$ elements of $A$ (namely $a_1,\ldots,a_i$). So the set $[g_{i}]$ contains at most $g_i-i$ elements of $\overline{A}$. From this, we have $\overline{a}_{g_i-i+1} \ge g_i+1$. We set $q=a_s-s$. For those \(i\) such that $g_i-i+1 \le q$, we have $s_{g_i-i+1}=\overline{a}_{g_i-i+1} \ge g_i+1$. In the case of $g_i-i+1 > q$, we see that $\overline{a}_{q+j}=a_s+j$ for all $1 \le j \le k-q$. Hence, 
	\[
	s_{g_i-i+1} \ge \overline{a}_{q+g_i-i+1-q}=a_s +g_i-i+1-q=g_i+1+s-i \ge g_i+1.
	\]
	
	\textbf{Fact 2.} If $A,B\in \mathcal L_0(G)\setminus \mathcal L_0(K)$ and $A\ne B$, then $\mathcal G_A\cap \mathcal G_B=\varnothing$
	
	\textbf{Proof.}
	Take any $A,B\in \mathcal L_0(G)\setminus \mathcal L_0(K)$. We have $|A|=|B|=s$. Assume that 
	\[
	A=\{a_1,\ldots,a_s\}, \quad B=\{b_1,\ldots,b_s\}\
	\]

	Since $|[a_s]\setminus A|=a_s-s$, we have $\pi_{a_s-s}(\overline{A})=[a_s]\setminus A$. Similarly, $\pi_{b_s-s}(\overline{B})=[b_s]\setminus B$. There are two cases.
	
	\textbf{The case $a_s=b_s$}. We have $|\pi_{a_s-s}(\overline{A})|=\pi_{b_s-s}(\overline{B})|$. Since $A \ne B$ and $|A|=|B|=s$, there exists $c \in A \setminus B$. We have $c \notin \overline{A}$. From $c \in A$, we have $c \le a_s=b_s$. So, $c \in [b_s] \setminus B$. Therefore $c \in \pi_{b_s-s}(\overline{B})$. On the other hand, it is obvious that $c \notin \pi_{a_s-s}(\overline{A})$. Thus,  $\pi_{a_s-s}(\overline{A}) \neq \pi_{b_s-s}. (\overline{B})$. Now, take any $S \in \mathcal{G}_A$ and $T \in \mathcal{G}_B$. We have $S=\pi_{a_s-s}(\overline{A})\cup S'$ and $T=\pi_{b_s-s}(\overline{B})\cup T'$, where $S' \in \binom{[a_s+1,n]}{k-a_s+s}$ and $T' \in \binom{[b_s+1,n]}{k-b_s+s}$. It is obvious that $S \neq T$ and so $\mathcal{G}_A \cap \mathcal{G}_B=\emptyset$.   
	
	\textbf{The case $a_s < b_s$}. We have $|\pi_{a_s-s}(\overline{A})|<\pi_{b_s-s}(\overline{B})|$. Since $A \ne B$ and $|A|=|B|=s$, there exists $c \in A \setminus B$. We have $c \le a_s <b_s$. So $c \notin \pi_{a_s-s}(\overline{A})$ and $c \in \pi_{b_s-s}(\overline{B})$. For any $S \in \mathcal{G}_A$ and any $T \in \mathcal{G}_B$, we have $S=\pi_{a_s-s}(\overline{A})\cup S'$ and $T=\pi_{b_s-s}(\overline{B})\cup T'$, where $S' \in \binom{[a_s+1,n]}{k-a_s+s}$ and $T' \in \binom{[b_s+1,n]}{k-b_s+s}$. We always have $c \in T$ since $c \in \pi_{b_s-s}(\overline{B})$. Note that $\min S' \ge a_s+1 >a_s \ge c$. Therefore, we always have $c \notin S $ for every $S'$ and $c \in T$ for every $T'$. Hence, $S \neq T$ and we conclude that $\mathcal{G}_A \cap \mathcal{G}_B=\emptyset$.
	
	\textbf{Fact 3} $|\mathcal{F}_A| <|\mathcal{G}_A| $
	
	\textbf{Proof} Since $\max A=a_s \le g_s=2s-1$, we have $k-s<k-a_s+s$. On the other hand
	$(k-s)+(k-a_s+s)=2k-a_s \le n-a_s$. It follows from Lemma \ref{comp} that
	\[
	|\mathcal{F}_A|=\binom{n-a_s}{k-s} < \binom{n-a_s}{k-a_s+s}=|\mathcal{G}_A|
	\]
	
	Now, we return to the theorem. By using the Facts 1, 2, and 3, we obtain
	\[
	|\mathcal{F}_0(G) \setminus \mathcal{F}_0(K)| < |\mathcal{F}(\mathcal{H}(K)) \setminus \mathcal{F}(\mathcal{H}(G))|
	\]
	which is equivalent to
	\[
	|\mathcal{F}_0(G)|-|\mathcal{F}_0(K)| < |\mathcal{F}(\mathcal{H}(K))|-|\mathcal{F}(\mathcal{H}(G))|
	\]
	This completes the proof.

\end{document}